\documentclass[12pt]{article}
\usepackage{color}

\usepackage{amsmath}
\usepackage{amsfonts}
\usepackage{amsthm}
\usepackage{amssymb}
\usepackage{bm}
\usepackage[applemac]{inputenc}
\usepackage[mathcal,mathbf]{euler}
\usepackage{enumerate}
\makeatletter

\setlength{\textwidth}{400pt}

\newtheoremstyle{mystyle}{}{}{\slshape}{2pt}{\scshape}{.}{ }{} 

\newtheorem{thm}{Theorem}[section]
\newtheorem{cor}[thm]{Corollary}

\newtheorem{prop}[thm]{Proposition}
\newtheorem{lemme}[thm]{Lemma}

\newtheorem{question}[thm]{Question}

\theoremstyle{definition}
\newtheorem{defi}[thm]{Definition}
\theoremstyle{mystyle}

\theoremstyle{remark}
\newtheorem{rem}[thm]{Remark}

\newcommand{\llg}{\langle}
\newcommand{\rrg}{\rangle}

\DeclareMathOperator{\tp}{tp}
\DeclareMathOperator{\val}{val}
\DeclareMathOperator{\st}{st}

\title{Finding generically stable measures}
\author{Pierre Simon}

\begin{document}
\maketitle

\begin{abstract}
This work builds on \cite{NIP2} and \cite{NIP3} where Keisler measures over $NIP$ theories are studied. We discuss two constructions for obtaining generically stable measures in this context. First, we show how to symmetrize an arbitrary invariant measure to obtain a generically stable one from it.  Next, we show that suitable sigma-additive probability measures give rise to generically stable Keisler measures. Also included is a proof that generically stable measures over o-minimal theories and the p-adics are smooth.
\end{abstract}

\section*{Introduction}

A major theme in the study of $NIP$ theories is the investigation of stable phenomena in them. Generically stable types are defined in \cite{NIP2}: they are types that behave, as far as non-forking extensions are concerned, like stable types. This notion was generalized to Keisler measures in \cite{NIP3}. Keisler measures (introduced first in \cite{Keisler} and studied again with a different approach in \cite{NIP1}, \cite{NIP2} and \cite{NIP3}) are regular Borel probability measures on the space of types. Equivalently, they are finitely additive probability measures on the boolean algebra of definable sets. They can also be considered as types in continuous logic (see \cite{BY2}). Some basic facts about them are recalled in the first section.

After Keisler's seminal work \cite{Keisler}, measures were introduced again for the study of $NIP$ theories in Hrushovski, Peterzil and Pillay's paper \cite{NIP1}, focussing mainly on invariant measures of groups. The notion of \emph{fsg} group is introduced. Measures are studied more at depth in \cite{NIP2} in order to prove \emph{compact domination} for definably compact groups in o-minimal theories. Again, applications use only invariant measures on groups, but a general study is initiated. Also, the notion of generically stable type is defined and the question is asked of a generalization to measures. This is done in \cite{NIP3} by Hrushovski, Pillay and the author where generically stable measures are defined. A number of equivalent properties are given.

The following examples of generically stable measures are known:

- a generically stable type: this is the motivating example,

- the (translation) invariant measure of an $fsg$ group,

- the $A$-invariant measure extending an $fsg$ type over $A$ (\cite{NIP3} section 4),

- the average measure of an indiscernible segment (\cite{NIP3}, 3.7),

- the Keisler measure induced by a $\sigma$-additive measure on the standard model, $\mathbb R$ or $\mathbb Q_p$ (\cite{NIP3}, section 6). In fact, those measures are proven to be \emph{smooth}, a stronger property.

In this paper, we generalize the last two constructions in this list. First, we show how to \emph{symmetrize} any measure, or equivalently to average an indiscernible segment of measures. Second, we show that any $\sigma$-additive measure induces a generically stable measure, under the assumption that externally definable sets in two variables are measurable (so that Fubini applies). Smoothness of the induced measure is not true in general, and to recover the full result of \cite{NIP3} we also include a proof that all generically stable measures in $\mathbb R$ or $\mathbb Q_p$ are smooth. In order to prove those results, we first establish in Section 2 a criterion to recognize a product measure $\mu_x \otimes \lambda_y$ when $\mu_x$ happens to be finitely satisfiable over some small model. Our strategy for proving that a given measure $\mu$ is generically stable will then be to construct a symmetric measure $\eta_{x_1x_2...}$ in $\omega$ variables, and show using this criterion that it is the Morley sequence of $\mu$.
\\

We describe the canonical example of a generically stable measure we have in mind. Consider the following theory: the signature is $\{\leq, E\}$. The reduct to $\{\leq\}$ is a dense linear order, and the theory says that $E$ is an equivalence relation with infinitely many classes, each of which is dense with respect to $\leq$. This theory has no generically stable type over the main sort (because every type falls in some cut of the linear order). However, one can build a generically stable measure over the main sort by averaging types that fall in different cuts. More precisely, assume for example that we work over a model $(M,\leq,E)$ and we have an increasing embedding $f: ((0,1),\leq) \rightarrow (M,\leq)$, where $(0,1)$ denotes the standard unit open interval. Let $\lambda_0$ denote the standard Lebesgue measure on $(0,1)$. Define a Keisler measure $\mu$ on $M$ by $\mu(a\leq x)=\lambda_0(f^{-1} ([a,+\infty)))$ and $\mu(aEx)=0$ for all $a\in M$. This measure is generically stable. It lifts the generically stable generic type of the imaginary sort $M/E$. We will see that this phenomenon is general: we can always lift generically stable types in an imaginary sort to generically stable measures on the main sort (see the proof of Lemma \ref{im}).

Of course this example is rather special in that the generic type of $M/E$ is stable and not just generically stable. Here is an example where this is not the case. Start with the structure with universe $\mathbb Q$ and with language $\{P_n(x,y) : n<\omega\}$ where $P_n(x,y)$ holds if and only if $|x-y|<n$. Then there is a generically stable type $p$ at infinity. Now expand that structure with a generic linear order $<$ (such that every infinite definable set in the reduct is dense-co-dense with respect to $<$). Then the type $p$ lifts in many ways to a generically stable measure, by taking the union of it with some Lebesgue measure on $<$. We thus obtain a non-smooth generically stable measure, but there are no generically stable types in the expanded theory, even in $M^{eq}$.
\\

We will use standard notation. We will work with a complete first order theory $T$ in some language $L$; $T$ is assumed to be $NIP$ throughout the paper. For simplicity, we assume that $T$ is one-sorted and work in $T^{eq}$. We have a monster model $\bar M$; $M$, $N$,... will denote small submodels of $\bar M$, and $A,B,C...$ small parameter sets. We will not distinguish between points and tuples; they will be named by $a,b,c...$ and $x,y,z...$ will designate variables of finite or infinite tuples. The notation $L_x(A)$ denotes the set of formulas with parameters in $A$ and free variable $x$.

The space of types over $A$ in variable $x$ is designated by $S_x(A)$. It is equipped with the usual compact topology and the associated $\sigma$-algebra of Borel subsets. By ``$X$ is Borel over $A$", we mean that it is a Borel subset of some $S_x(A)$. We write $a \equiv_M b$ for $\tp(a/M)=\tp(b/M)$.

By a \emph{global} type or measure, we mean a type or a measure over $\bar M$.

\section{Preliminaries}

We recall some basic facts about Keisler measures. We will be brief, and the reader is refered to \cite{NIP2} and \cite{NIP3} for more details.

We make a blanket assumption that $T$ is $NIP$.

\subsubsection*{Basic definitions}
A \emph{Keisler measure} (or simply a \emph{measure}) over $A$ in variable $x$ is a finitely additive probability measure on the boolean algebra $L_x(A)$ of formulas with free variable $x$ and parameters in $A$. As in section 4 of \cite{NIP2}, such a measure extends uniquely to a regular Borel probability ($\sigma$-additive) measure on the type space $S_x(A)$. (``Regular" means that the measure of any Borel set $X$ is the infimum of the measures of open sets $O$ such that $X\subseteq O$. Furthermore, in our situation, working on a totally disconnected space, the measure of $O$ is itself the supremum of measures of clopen sets inside it.)

Conversely, given a regular Borel measure on $S_x(A)$, its restriction to the clopen sets gives a Keisler measure.

We will denote by $\mathcal M_x(A)$ the space of Keisler measures over $A$ and often write $\mu_x \in \mathcal M(A)$ for $\mu \in \mathcal M_x(A)$, keeping track of the variable in the name of the measure. We can consider $\mathcal M_x(A)$ as a subset of $[0,1]^{L_x(A)}$. It inherits the product topology making it a compact Hausdorff space.

\begin{lemme}\label{boolpartial}
Let $\Omega \subseteq L_x(A)$ be a set of formulas closed under intersection, union and complement and containing $\top$. Let $\mu_0$ be a finitely additive measure on $\Omega$ with values in $[0,1]$ such that $\mu_0(\top)=1$. Then $\mu$ extends to a Keisler measure over $A$.
\end{lemme}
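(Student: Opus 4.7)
My plan is to use the compactness of $\mathcal{M}_x(A) \subseteq [0,1]^{L_x(A)}$ already recalled in the text, together with an easy realization of measures on a finite Boolean subalgebra by convex combinations of point masses. For each $\psi \in \Omega$ the set
\[
E_\psi := \{\mu \in \mathcal{M}_x(A) : \mu(\psi) = \mu_0(\psi)\}
\]
is closed in $\mathcal{M}_x(A)$, so what we want is nonemptiness of $\bigcap_{\psi \in \Omega} E_\psi$. By compactness it is enough to show the finite intersection property: given any finite $\Omega_0 \subseteq \Omega$, produce a single Keisler measure $\mu \in \mathcal{M}_x(A)$ with $\mu(\psi) = \mu_0(\psi)$ for every $\psi \in \Omega_0$.

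For the finite case I would let $B_0$ be the (finite) Boolean subalgebra of $L_x(A)$ generated by $\Omega_0$. Because $\Omega$ is itself a Boolean subalgebra containing $\Omega_0$, we have $B_0 \subseteq \Omega$, so $\mu_0$ is already defined on all of $B_0$. List the nonzero atoms $\chi_1,\dots,\chi_n$ of $B_0$; each is a consistent formula (nonzero in $L_x(A)$), hence realized by some type $p_i \in S_x(A)$. Setting
\[
\mu := \sum_{i=1}^{n} \mu_0(\chi_i)\,\delta_{p_i}
\]
yields a finite convex combination of Dirac measures, in particular a Keisler measure of total mass $\sum_i \mu_0(\chi_i) = \mu_0(\top) = 1$. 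For each $\psi \in \Omega_0$, writing $\psi = \bigvee_{\chi_i \vdash \psi} \chi_i$ in $B_0$ and using that the $\delta_{p_i}$ are supported on disjoint atoms gives $\mu(\psi) = \sum_{\chi_i \vdash \psi} \mu_0(\chi_i) = \mu_0(\psi)$. Thus $\mu \in \bigcap_{\psi \in \Omega_0} E_\psi$, and the finite intersection property is verified.

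An element of $\bigcap_{\psi\in\Omega} E_\psi$ furnished by compactness is then the desired Keisler measure extending $\mu_0$. The only point requiring any care is the identification used in the finite step, namely that nonzero elements of $B_0$ (viewed as an abstract finite Boolean algebra) correspond to consistent formulas and hence are each satisfied by some type in $S_x(A)$; this is immediate from the fact that $L_x(A)$ is the Lindenbaum algebra of $L_x(A)$-formulas modulo $T$ together with the diagram of $A$, so $\varphi \neq \bot$ is equivalent to $\varphi$ being consistent, which is equivalent to the existence of $p \in S_x(A)$ with $p \vdash \varphi$. I do not foresee a real obstacle; the proof is a standard compactness-plus-finite-atomic-approximation argument, and it is essentially the measure-theoretic counterpart of the statement that any consistent partial type extends to a complete type.
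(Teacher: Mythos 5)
Your proof is correct and takes essentially the same route as the paper: a compactness argument in (a closed subset of) $[0,1]^{L_x(A)}$ reduces everything to finite Boolean subalgebras, where one works with atoms. The only real difference is in the finite step: the paper considers an arbitrary finitely generated subalgebra $B$ of $L_x(A)$ and extends $\mu_0$ from the trace $\Omega\cap B$ by distributing the mass of each $\Omega$-atom over the finer atoms of $B$, whereas you restrict attention to finite subalgebras of $\Omega$ itself and realize $\mu_0$ there by a convex combination of Dirac measures at types satisfying the atoms. Your variant is equally valid and slightly tidier in one respect: the finite approximants are already genuine Keisler measures, so finite additivity of the limit comes for free from the closedness of $\mathcal M_x(A)$ rather than having to be imposed as an extra family of closed conditions.
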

\begin{proof}
By compactness in the space $[0,1]^{L_x(A)}$, it is enough to show that given $\psi_1(x),..,\psi_n(x)$ formulas in $L_x(A)$, there is a function $f=\langle \psi_1,..,\psi_n \rangle \rightarrow [0,1]$ finitely additive and compatible with $\mu_0$ (where $\langle B \rangle$ denotes the boolean algebra generated by $B$). We may assume that $\psi_1,..,\psi_n$ are the atoms of the boolean algebra $B$ that they generate.

The elements of $\Omega$ in $B$ form a sub-boolean algebra. Let $\phi_1,..,\phi_m$ be its atoms. We have say:
$$ \phi_1 = \psi_1 \vee ... \vee \psi_{i_1} \quad \phi_2 = \psi_{i_2} \vee ... \vee \psi_{i_3} \quad \text{etc.}$$
Then any finitely additive $f$ satisfying $f(\psi_1)+...+f(\psi_{i_1}) = \mu_0(\phi_1)$ etc. will do. 
\end{proof}

Here are some basic definitions: 

\begin{defi}
Let $M \prec N$, with $N$ $|M|^+$-saturated and let $\mu_x \in \mathcal M(N)$,
\begin{itemize}
\item $\mu$ is \emph{finitely satisfiable} in $M$ if for every $\phi \in L_x(N)$ such that $\mu(\phi)>0$, there is $a\in M$ such that $N\models \phi(a)$.
\item $\mu$ is $M$-\emph{invariant} if for every $\phi(x;y) \in L$, and $b\equiv_M b'$, $\mu(\phi(x;b))=\mu(\phi(x;b'))$.
\item $\mu$ is \emph{definable} over $M$ if it is $M$-invariant and for every $\phi(x;y) \in L$, and $r\in [0,1]$, the set $\{ p\in S_y(M): \mu(\phi(x;b))\leq r$ for any $b\in N, b\models p \}$ is a closed set of $S_y(M)$.
\item $\mu$ is Borel-definable over $M$ if the above set is a Borel set of $S_y(M)$.
\end{itemize}
\end{defi}

\begin{prop}[\cite{NIP2} 4.9]
If $\mu \in \mathcal M(N)$ is $M$-invariant ($N$ is $|M|^+$-saturated), then it is Borel definable over $M$.
\end{prop}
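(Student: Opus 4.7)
The plan is to show that for each formula $\phi(x;y) \in L$, the map $F_\phi : S_y(M) \to [0,1]$ defined by $F_\phi(q) := \mu(\phi(x;b))$ for any $b \models q$ (well-defined thanks to $M$-invariance) is Borel measurable; the set appearing in the statement, namely $F_\phi^{-1}([0,r])$, will then automatically be Borel. My strategy is to exhibit $F_\phi$ as the uniform limit of a sequence of lower semicontinuous (hence Borel) functions $h_k$ on $S_y(M)$, obtained by pushing down ``sample-average'' approximations of $\mu$.

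The first step is to invoke the uniform VC-style approximation theorem for NIP families, which is the combinatorial heart of \cite{NIP2} section 4: for every NIP formula $\phi(x;y)$ and every $\epsilon > 0$, there is an integer $n$ such that for any Keisler measure $\mu$ in variable $x$ one can find parameters $a_1,\ldots,a_n$ with
$$\left|\mu(\phi(x;b)) - \tfrac{1}{n}\bigl|\{i : \models \phi(a_i;b)\}\bigr|\right| < \epsilon \text{ for all } b.$$
I will take this as the main, non-trivial input. Applied with $\epsilon = 1/k$, it gives a finite tuple $A_k = \{a_1^k,\ldots,a_{n_k}^k\}$ in a sufficiently saturated model containing $N$. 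On the compact space $S_y(MA_k)$ the function
$$g_k(r) := \tfrac{1}{n_k}\bigl|\{i : \phi(a_i^k;y) \in r\}\bigr|$$
is continuous with finite range, so each level set $g_k^{-1}(j/n_k)$ is clopen.

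Next, let $\pi_k: S_y(MA_k) \to S_y(M)$ be the restriction map, a continuous surjection between compact Hausdorff spaces (in particular closed), and define $h_k: S_y(M) \to [0,1]$ by $h_k(q) := \min\{g_k(r) : r \in \pi_k^{-1}(q)\}$. Then $\{h_k \leq c\} = \pi_k(\{g_k \leq c\})$ is the image of a closed set under the closed map $\pi_k$, hence closed, so $h_k$ is lower semicontinuous and in particular Borel. Finally, for any $q \in S_y(M)$ and any $r \in \pi_k^{-1}(q)$ with realization $b \models r$, one has $b \models q$, so the uniform approximation gives $|g_k(r) - F_\phi(q)| = |g_k(r) - \mu(\phi(x;b))| < 1/k$; taking the minimum over $r \in \pi_k^{-1}(q)$ yields $|h_k(q) - F_\phi(q)| < 1/k$ for every $q$. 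Hence $F_\phi$ is the uniform limit of the Borel functions $h_k$, so is itself Borel. The only non-routine ingredient is the VC approximation; granted it, transferring the approximation down to $S_y(M)$ through the compact quotient map $\pi_k$ is straightforward.
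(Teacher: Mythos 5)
The paper itself gives no proof of this proposition --- it is quoted from \cite{NIP2} --- so I can only measure your argument against the known proofs. Your topological bookkeeping is fine: $g_k$ is continuous with clopen level sets, $\{h_k\leq c\}=\pi_k(\{g_k\leq c\})$ is the continuous image of a compact set, hence closed, so $h_k$ is lower semicontinuous, and a uniform limit of Borel functions is Borel. The gap is entirely in the one inequality $|h_k(q)-F_\phi(q)|<1/k$. The approximation theorem (Proposition \ref{VCappr}) gives $a_1^k,\ldots,a_{n_k}^k$ such that the frequency approximates $\mu(\phi(x,b))$ for all $b$ \emph{in the domain of} $\mu$, i.e.\ for $b\in N$; moreover the $a_i^k$ must in general be taken outside $N$. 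Your $h_k(q)$ is a minimum of $g_k(r)$ over \emph{all} completions $r$ of $q$ to $MA_k$, and such an $r$ need not be realized by any $b\in N$. For a realization $b\notin N$ the quantity $\mu(\phi(x,b))$ is not even defined, and the frequency $\frac{1}{n_k}|\{i:\models\phi(a_i^k,b)\}|$ is not constrained by the approximation. Concretely, in DLO take $\mu$ to be the type at $+\infty$ over $N$ and $\phi(x,y)=(x>y)$. The approximation forces every $a_i^k$ to lie above $N$, and then any non-realized $q\in S_y(M)$ (e.g.\ the type at $+\infty$ over $M$) has a completion $r$ containing $y>a_i^k$ for all $i$, whence $g_k(r)=0$ and $h_k(q)=0$, while $F_\phi(q)=1$. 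So $h_k$ does not converge to $F_\phi$, and the same failure occurs if one replaces $\min$ by $\max$ or any mixture.

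This is not a repairable technicality but the crux of the proposition: the function $b\mapsto Fr(\phi(a_1^k,\ldots,a_{n_k}^k;b))$ is only an \emph{externally definable} predicate on $N$, and producing from it a genuinely Borel function on $S_y(M)$ is exactly what has to be proved. Note that your argument uses the $M$-invariance of $\mu$ only to make $F_\phi$ well defined, and uses NIP only through the sampling theorem; the descent to $S_y(M)$ is done by a purely topological device, which is why it breaks. The proofs in the literature (\cite{NIP2}, 4.9) must use invariance more seriously: one arranges for the approximating objects to be themselves $M$-invariant global types, for which Borel (indeed strongly Borel) definability over $M$ is available via the alternation-rank analysis of invariant types in NIP theories, and then $F_\phi$ is a uniform limit of averages of the resulting Borel indicator functions. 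Some ingredient of this kind --- explaining why the approximants, and not just $\mu$, live over $M$ in a Borel way --- is missing from your proof.
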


The $\emph{support}$ of $\mu\in \mathcal M(N)$ is the set of types $p\in S(N)$ weakly random for $\mu$, {\it i.e.}, such that $p\vdash \neg \phi(x)$ for every $\phi(x)\in L(N)$ such that $\mu(\phi(x))=0$. We will denote the support of $\mu$ by $S(\mu)$; it is a closed set of $S(N)$. Note that if $\mu$ is finitely satisfiable in $M$ then every type in $S(\mu)$ is also finitely satisfiable in $M$.

The next proposition says that in $NIP$ theories, measures can be well approximated by averages of types. We use the notation $Fr(X;a_1,..,a_n)$ which stands for $\frac 1 n |\{i: a_i\in X\}|$.
\begin{prop}[\cite{NIP3} 2.8]\label{VCappr}
Let $\mu_x$ be any measure over $M$.
 Let $\phi(x,y)\in L$, $\epsilon>0$,
 and let $X_1,..,X_k$ be Borel sets over $M$. Then for all large enough $n$ there are $a_1,..,a_n$ such that for all 
$r=1,..,k$ and all $b\in M$, $\mu(X_r \cap \phi(x,b))$ is within $\epsilon$
 of $F r(X_r \cap \phi(x, b); a_1,..,a_n )$.
\end{prop}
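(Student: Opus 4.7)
The plan is to derive this directly from the Vapnik--Chervonenkis $\epsilon$-approximation theorem, using only the $NIP$ hypothesis together with the fact that traces and finite unions preserve finite VC dimension.

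First, since $T$ is $NIP$, the formula $\phi(x,y)$ has finite VC dimension: the family of clopens $\mathcal F_0 = \{[\phi(x,b)] : b \in M\}$ of $S_x(M)$ has some finite VC dimension $d$. Two standard combinatorial observations let us upgrade this to the family we actually care about. On the one hand, for any fixed set $X \subseteq S_x(M)$ (not necessarily definable or even Borel), the trace family $\{X \cap A : A \in \mathcal F_0\}$ still has VC dimension at most $d$. On the other hand, a finite union of VC classes is a VC class, with dimension bounded in terms of the dimensions and the number of classes. Combining these, the family
$$ \mathcal F = \{ X_r \cap [\phi(x,b)] : r \leq k,\ b \in M \} $$
has VC dimension bounded by some $D = D(d,k) < \infty$.

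Next I would invoke the Vapnik--Chervonenkis $\epsilon$-approximation theorem in the following form: for every $D$ and $\epsilon>0$ there is $n_0 = n_0(D,\epsilon)$ such that for any probability measure $\nu$ on a $\sigma$-algebra containing $\mathcal F$ and any $n \geq n_0$,
$$ \nu^{\otimes n}\Bigl( \bigl\{(a_1,\ldots,a_n) : \sup_{A \in \mathcal F} |\nu(A) - Fr(A; a_1,\ldots,a_n)| < \epsilon\bigr\} \Bigr) > 0. $$
Applied to $\nu = \mu$, regarded as a regular Borel probability measure on $S_x(M)$, this immediately produces tuples $(a_1,\ldots,a_n)$ (realized in $\bar M$) with the claimed uniform approximation property.

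The only technical point I would need to address is the measurability of the supremum defining the event above, since this is required to make the probability statement meaningful. By Sauer--Shelah, for each fixed sample the number of distinct traces $\{\{i \leq n : a_i \in A\} : A \in \mathcal F\}$ is polynomial in $n$, so the supremum is effectively a maximum over a finite, measurably parametrized family; together with regularity of $\mu$ (which lets each Borel set $X_r$ be approximated from within by clopens, so the sup is unchanged up to a $\mu^{\otimes n}$-null set) this yields the required measurability. This is the only step where the non-definability of the $X_r$ requires care, and it causes no real difficulty; the substantive content of the proposition is entirely the VC theorem combined with $NIP$.
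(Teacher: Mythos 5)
The paper itself does not prove this proposition; it is quoted from \cite{NIP3} (Proposition 2.8 there), and the argument given in that source is exactly the one you propose: $NIP$ gives finite VC dimension for $\{[\phi(x,b)]:b\in M\}$ viewed as a family of clopen subsets of $S_x(M)$, intersecting with a fixed set and taking finite unions preserve finite VC dimension, and the VC $\epsilon$-approximation theorem applied to $\mu$ as a regular Borel probability measure on $S_x(M)$ produces the sample (whose types one then realizes in $\bar M$). All of that is correct, including both combinatorial preservation facts.

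The one step that does not hold up as written is your measurability argument. Sauer--Shelah bounds, for each \emph{fixed} sample $\bar a$, the number of traces $\{i\le n: a_i\in A\}$, but it says nothing about how these traces, nor the set of values $\{\nu(A): A \text{ realizing a given trace}\}$, vary with $\bar a$; the bad event $\bigcup_{A\in\mathcal F}\{\bar a: |\nu(A)-Fr(A;\bar a)|\ge\epsilon\}$ is an uncountable union of measurable sets and there is no reason for it to be measurable. Regularity does not rescue this: replacing $X_r$ by a clopen approximant changes $Fr(\,\cdot\,;\bar a)$ by a small amount only on a set of samples of large measure, not off a $\mu^{\otimes n}$-null set, and even for clopen $X_r$ the family remains uncountably parametrized by $b\in M$, so the same issue persists. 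The standard repair --- and the reason the proposition is nonetheless true --- is that no measurability is needed for the conclusion: the symmetrization proof of the VC theorem bounds the \emph{outer} probability of the bad event by roughly $8\,\pi_{\mathcal F}(n)\,e^{-n\epsilon^2/32}$, where $\pi_{\mathcal F}$ is the shatter function, so for $n$ large the complement has positive inner measure and in particular is nonempty, which is all that is claimed. With that substitution for your last paragraph, the proof is complete and coincides with the intended one.
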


Taking $X_1=S(\mu)$ in the above proposition, we see that we can always assume that $\tp(a_i/M)\in S(\mu)$ for every index $i$.

\subsubsection*{Invariant extensions}
As in the case of types, the study of Keisler measures differs from measure theory in that the space in two dimensions is not the product of the one dimensional spaces, and there are in general different ways to amalgamate two measures in one variable to form a measure in two variables. We recall here the basic construction of invariant extensions.

Let $M\prec N$, $N$ being $|M|^+$-saturated, and $\mu_x \in \mathcal M(N)$ be $M$-invariant. If $\lambda_y\in \mathcal M(N)$ is any measure, then we can define the \emph{invariant extension} of $\mu_x$ over $\lambda_y$, denoted $\mu_x \otimes \lambda_y$. It is a measure in the two variables $x,y$ defined the following way. Let $\phi(x,y) \in L(N)$. Take a small model $P\prec N$ containing $M$ and the parameters of $\phi$. Define
$\mu_x \otimes \lambda_y (\phi(x,y)) = \int f(y) d\lambda_y,$ the integral ranging over $S_y(P)$
where $f(p) = \mu(\phi(x,b))$ for $b\in N$, $b\models p$ (this function is Borel-measurable by Borel-definability of $\mu$). It is easy to check that this does not depend on the choice of $P$.

If $\lambda_y$ is also invariant, we can also form the product $\lambda_y \otimes \mu_x$. In general it will not be the case that $\lambda_y \otimes \mu_x=\mu_x \otimes \lambda_y$.
\\

If $\mu_x$ is a global $M$-invariant measure, we define by induction: $\mu^{(n)}_{x_1...x_n}$ by $\mu^{(1)}_{x_1}=\mu$ and $\mu^{n+1}_{x_1...x_{n+1}} = \mu_{x_{n+1}} \otimes \mu^{(n)}_{x_1...x_n}$. We let $\mu^{(\omega)}_{x_1x_2...}$ be the union and call it the \emph{Morley sequence} of $\mu$. It is an indiscernible sequence in the following sense.

\begin{defi}
A measure $\mu_{x_1x_2...}$ is indiscernible over $A$ if for every $\phi(x_1,..,x_n) \in L(A)$ and indices $i_1<...<i_n$, we have $$\mu(\phi(x_1,..,x_n))=\mu(\phi(x_{i_1},..,x_{i_n})).$$
\end{defi}

We define in the same way $\mu_{x_1x_2...}$ to be \emph{totally indiscernible} by removing in the above definition the assumption that the indices $i_1,..,i_n$ are ordered.

We will need the following result from \cite{BY1} (see also \cite{NIP3}, 2.10).

\begin{prop}\label{by}
If $\mu_{x_1,x_2,...}\in \mathcal M(M)$ is indiscernible and $\omega_{y,x_1,x_2,...}$ extends $\mu$, then for every formula $\phi(x,y)\in L(M)$, $\lim_{i\rightarrow \omega} \omega(\phi(x_i,y))$ exists. Equivalently, for any $\phi(x,y)$ and $\epsilon>0$, there is $N$ such that for any measure $\omega_{y,x_1,x_2,...}$ as above, we have $|\omega(\phi(x_i,y)) -\omega(\phi(x_{i+1},y))| \geq \epsilon$ for at most $N$ values of $i$.
\end{prop}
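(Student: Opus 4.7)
The two formulations are equivalent by a compactness argument in $\mathcal{M}_{yx_1x_2\ldots}(M)$: the uniform version obviously implies pointwise convergence, while a failure of uniformity would, by compactness of the product topology, yield a cluster-point measure $\omega^*$ extending $\mu$ whose sequence $\omega^*(\phi(x_i, y))$ has infinitely many $\epsilon$-sized oscillations, contradicting pointwise convergence. I therefore focus on the uniform statement.

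I would derive it from the stronger sum-bound
\[
\sum_{i \geq 1} \omega\bigl(\phi(x_i, y)\,\triangle\,\phi(x_{i+1}, y)\bigr) \;\leq\; k,
\]
where $k = k(\phi)$ depends only on $\phi$. Since $|\omega(\phi(x_i, y)) - \omega(\phi(x_{i+1}, y))| \leq \omega(\phi(x_i, y) \triangle \phi(x_{i+1}, y))$, this immediately yields the uniform $N = k/\epsilon$. The constant $k$ would come from the classical $NIP$ alternation lemma: for any honestly $M$-indiscernible sequence $(c_i)$ and any parameter $d$, $\{i : \models \phi(c_i, d)\}$ has at most $k$ alternations. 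To transfer this bound from types to measures, I would apply Proposition \ref{VCappr} to $\omega$ restricted to the finite tuple $(y, x_1, \ldots, x_n)$, approximating each $\omega(\phi(x_i, y) \triangle \phi(x_{i+1}, y))$ for $i < n$ by the corresponding sample frequency in realizations $(b^j, a_1^j, \ldots, a_n^j)_{j \leq N}$, with an approximation error small compared to $1/n$. If each row $(a_i^j)_i$ of the sample is an honest indiscernible sequence, the row-wise $k$-alternation bound sums and averages to the desired total $k$-bound (up to a vanishing error), uniformly in $n$ and in $\omega$; letting $n \to \infty$ finishes.

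The main obstacle is producing such a sample with indiscernible rows, since being an indiscernible \emph{measure} is strictly weaker than having indiscernible \emph{realizations}: two distinct, non-indiscernible types may appear with equal weight in $S(\mu)$ and together yield an indiscernible measure. This is the crux addressed by \cite{BY1} via the continuous-logic reformulation, where Keisler measures become continuous-logic types, indiscernibility of $\mu$ as a measure really is indiscernibility of $(x_i)$ as continuous-logic parameters, and the $NIP$ alternation lemma transfers verbatim to this setting once one knows that $NIP$ of $T$ implies $NIP$ of its continuous-logic expansion. A more hands-on alternative would combine \ref{VCappr} with a Ramsey-type extraction on the rows of the sample to replace them by honest indiscernible subsequences while carefully controlling the effect on the measure approximation. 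Either route delivers the sum-bound and hence the proposition.
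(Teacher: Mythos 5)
There is no in-paper proof to compare against: the paper imports this statement from \cite{BY1} (see also \cite{NIP3}, 2.10) and does not argue it. Judged on its own, your proposal has a genuine gap: the sum-bound $\sum_{i\geq 1}\omega\bigl(\phi(x_i,y)\triangle\phi(x_{i+1},y)\bigr)\leq k(\phi)$ on which the whole argument rests is false. Let $T$ be the (stable, hence $NIP$) theory of an infinite set with a unary predicate $P$ whose interpretation is infinite and co-infinite, let $\mu_1$ be the global generically stable measure with $\mu_1(P(x))=1/2$ and no atoms, and let $\mu=\mu_1^{(\omega)}$ be its Morley power, which is totally indiscernible. For $\phi(x,y):=P(x)$ one computes $\mu(P(x_i)\triangle P(x_{i+1}))=1/2$ for every $i$, so the sum diverges; even the weaker claim that only boundedly many $i$ satisfy $\omega(\phi(x_i,y)\triangle\phi(x_{i+1},y))\geq\epsilon$ fails. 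Here $\omega(\phi(x_i,y))=1/2$ is constant, so the proposition itself holds trivially: the quantity it controls is the oscillation of the \emph{values} $\omega(\phi(x_i,y))$, and that cannot be reached through the symmetric differences, which may all stay large while the values do not move. Consequently the sample-plus-alternation argument and its Ramsey variant are aimed at a false target, and the obstacle you correctly identify (an indiscernible measure need not be an average of indiscernible sequences of realizations) is exactly what makes any route through realizations and symmetric differences collapse.

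What does work is the route you mention only as an aside: pass to the randomization, where $\omega$ becomes a type in continuous logic, the $x_i$ form an indiscernible sequence of variables, and the expectation of $\phi(x,y)$ is a $[0,1]$-valued formula whose value at $(x_i,y)$ is $\omega(\phi(x_i,y))$. The theorem of \cite{BY1} that the randomization of an $NIP$ theory is $NIP$ --- a substantial theorem, not a ``verbatim transfer'' --- then yields the bounded-oscillation statement for that continuous formula directly, with no sum-bound anywhere. Your reduction of the limit statement to the uniform oscillation bound is essentially sound, but note two points it silently uses: a sequence whose consecutive differences tend to $0$ need not converge, so to get the Cauchy property you must apply the uniform bound to the measure obtained by restricting $\omega$ to a subsequence of the variables (which still extends $\mu$, by indiscernibility); the same re-indexing is needed to make the compactness argument close up, since ``at least $N$ oscillating indices'' is not a closed condition until the witnesses are pulled back to an initial segment.
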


\subsubsection*{Generically stable measures}

This paper is concerned with building generically stable measures. They are measures that behave very much like types in a stable theory (at least as far as non-forking extensions are concerned). Generically stable \emph{types} were defined in \cite{NIP2} and this definition was naturally extended to measures in \cite{NIP3}. We recall here some equivalent definitions (a few others are given in \cite{NIP3}, Theorem 3.3).

\begin{thm}[Generically stable measure]\label{genstable}
Let $\mu_x$ be a global $M$-invariant measure. Then the following are equivalent:
\begin{enumerate}
\item $\mu$ is both definable and finitely satisfiable (necessarily over $M$),
\item $\mu^{(\omega)}|_M$ is totally indiscernible,
\item for any global $M$-invariant Keisler measure $\lambda_y$, $\mu_x \otimes \lambda_y=\lambda_y \otimes \mu_x$,
\item $\mu$ commutes with itself: $\mu_x \otimes \mu_y=\mu_y\otimes \mu_x$. 
\end{enumerate}
If $\mu$ satisfies one of those properties, we say it is \emph{generically stable}.
\end{thm}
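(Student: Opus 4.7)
The proof proceeds cyclically via $(1) \Rightarrow (3) \Rightarrow (4) \Rightarrow (2) \Rightarrow (1)$. Of these, $(3) \Rightarrow (4)$ is immediate on specializing $\lambda_y = \mu_y$, and $(4) \Rightarrow (2)$ reduces to the observation that the self-commutation $\mu_{x_i} \otimes \mu_{x_{i+1}} = \mu_{x_{i+1}} \otimes \mu_{x_i}$ allows swapping adjacent variables inside $\mu^{(n)}$; since adjacent transpositions generate the symmetric group, every permutation preserves $\mu^{(\omega)}$ and a fortiori its restriction to $M$.

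For $(1) \Rightarrow (3)$ the guiding slogan is that a definable measure always commutes with a finitely satisfiable one. Given a global $M$-invariant $\lambda_y$ and $\phi(x,y) \in L(\bar M)$, I would compute $\mu_x \otimes \lambda_y(\phi)$ by exploiting definability of $\mu$, approximating $b \mapsto \mu(\phi(x,b))$ uniformly by simple $M$-definable step functions of $\tp(b/M)$, and compute $\lambda_y \otimes \mu_x(\phi)$ by invoking Proposition \ref{VCappr} and choosing the approximating tuple $a_1,\ldots,a_n$ inside $M$ itself, which is possible because every type in $S(\mu)$ is finitely satisfiable in $M$. Since the two iterated integrations agree at the level of finite averages by elementary Fubini, passing to the limit shows the two product measures coincide.

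The main difficulty lies in $(2) \Rightarrow (1)$. To establish finite satisfiability, suppose $\mu(\phi(x,\bar b)) = \varepsilon > 0$; I would apply Proposition \ref{VCappr} to the Morley measure $\mu^{(n)}$ to produce a tuple $(a_1,\ldots,a_n)$ from the support of $\mu^{(n)}$ with $Fr(\phi(x,\bar b);a_1,\ldots,a_n) \approx \varepsilon$, and then use total indiscernibility of $\mu^{(\omega)}|_M$ together with NIP (in the form of Proposition \ref{by}) to pull back the realization of $\phi$ into $M$. To establish definability, that same VC approximation writes $b \mapsto \mu(\phi(x,b))$ as a uniform limit of frequency functions $Fr(\phi(x,b);a_1,\ldots,a_n)$, each of which depends continuously on $\tp(b/M)$; total indiscernibility combined with Proposition \ref{by} forces the limit to remain continuous in $\tp(b/M)$, which is exactly definability of $\mu$ over $M$. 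The chief technical hurdle is arranging the VC-approximating tuple to lie in the support of $\mu^{(n)}$ and extracting the NIP stabilization uniformly in the parameter $b$, so that both conclusions can be obtained simultaneously from the single hypothesis of total indiscernibility.
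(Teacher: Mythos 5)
First, a caveat: the paper gives no proof of Theorem \ref{genstable} at all --- it is quoted verbatim from \cite{NIP3} (Theorem 3.3 there), so there is no in-paper argument to compare against and your sketch has to stand on its own. The architecture you chose (the cycle $(1)\Rightarrow(3)\Rightarrow(4)\Rightarrow(2)\Rightarrow(1)$) is reasonable, and $(3)\Rightarrow(4)$ is indeed immediate; $(4)\Rightarrow(2)$ works essentially as you say, though you are silently using associativity of $\otimes$ for invariant measures to regroup $\mu^{(n)}$ around an adjacent pair --- that is a genuine (if standard) Fubini-type lemma and should be cited or proved, since the integrands involved are only Borel-definable.

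The two substantive implications, however, have real gaps. In $(1)\Rightarrow(3)$ your key step is to ``choose the approximating tuple $a_1,\ldots,a_n$ inside $M$ itself, which is possible because every type in $S(\mu)$ is finitely satisfiable in $M$.'' This is a non sequitur: finite satisfiability of the support types in $M$ does not let you realize them in $M$, and Proposition \ref{VCappr} only produces a tuple realizing types in $S(\mu) \subseteq S_x(\bar M)$. What you actually need is the \emph{uniform finite approximation} of $\mu$ over $M$ (a tuple in $M^n$ with $|\mu(\phi(x,b)) - Fr(\phi(x,b);a_1,\ldots,a_n)| \le \epsilon$ simultaneously for all $b \in \bar M$), which is itself one of the deeper equivalent characterizations of generic stability in \cite{NIP3}; it is normally derived from total indiscernibility of $\mu^{(\omega)}$ via a VC/law-of-large-numbers argument applied to $\mu^{(n)}$ --- i.e.\ from item (2), which is not available at this point of your cycle. (There is also a secondary issue: the integrand $a \mapsto \lambda(\phi(a,y))$ in $\lambda_y \otimes \mu_x$ is only Borel over $M$, so the Borel sets it defines must be fed into Proposition \ref{VCappr} as the sets $X_r$ for the ``elementary Fubini at the level of finite averages'' to pass to the limit.) Similarly, in $(2)\Rightarrow(1)$ the phrase ``use total indiscernibility together with Proposition \ref{by} to pull back the realization of $\phi$ into $M$'' names no mechanism: this is precisely the hard content of the theorem, and as written nothing connects a tuple in the support of $\mu^{(n)}$ (which lives over $\bar M$) to points of $M$. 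A workable route is to prove $(1)\Rightarrow(2)$ via the lemma that a definable measure commutes with a finitely satisfiable one, then $(2)\Rightarrow$ finite approximation over $M$ $\Rightarrow(3)\Rightarrow(4)\Rightarrow(2)$, closing the loop with $(2)\Rightarrow(1)$ done carefully; but each of those arrows needs the details you have elided.
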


Let $\mu_x$ be a measure, and $f$ a definable map whose range is the sort of the variable $x$. Then one can consider the push-forward measure $\lambda_y=f_*(\mu)$ defined by $\lambda_y(\phi(y))=\mu_x(\phi(f(x)))$. This is again a Keisler measure. If $\mu$ is definable over $M$ (resp. finitely satisfiable in $M$) and $f$ is $M$-definable, then $f_*(\mu)$ is again definable over $M$ (resp. finitely satisfiable in $M$). In particular, if $\mu$ is a global generically stable measure, then $f_*(\mu)$ is also generically stable.

\subsubsection*{Smooth measures}

\begin{defi}[Smooth]
A measure $\mu \in \mathcal M(N)$ is smooth if $\mu$ has a unique global extension. If $M\subset N$, we will say that $\mu$ is smooth over $M$ is $\mu|_M$ is smooth.
\end{defi}

The following important properties are proved in \cite{NIP3}.

\begin{prop}[\cite{NIP3}, 2.3]\label{smoothlemma}
Let $\mu_x$ be smooth over $M$ and let $\phi(x,y) \in L$ and $\epsilon >0$. Then there are formulas $\nu_i^1(x)$, $\nu_i^2(x)$ and $\psi_i(y)$ for $i=1,..,n$ in $L(M)$ such that: 
\begin{enumerate}
\item The formulas $\psi_i(y)$ form a partition of the $y$-space,
\item for all $i$ and $b\in \bar M$, if $\psi_i(b)$ holds, then $\bar M \models \nu_i^1(x) \rightarrow \phi(x,b) \rightarrow \nu_i^2(x)$,
\item for each $i$, $\mu(\nu_i^2(x)) - \mu(\nu_i^1(x)) < \epsilon$.
\end{enumerate}
\end{prop}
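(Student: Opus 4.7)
The plan is to establish a pointwise (in $y$) approximation of $\phi(x,b)$ by pairs of $L(M)$-formulas with small $\mu$-gap, and then patch via compactness of $S_y(M)$ into the required finite partition.

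The pointwise claim is: for every $b\in\bar M$ and $\epsilon>0$ there exist $\nu^1,\nu^2\in L(M)$ with $\bar M\models \nu^1(x)\to\phi(x,b)\to\nu^2(x)$ and $\mu(\nu^2)-\mu(\nu^1)<\epsilon$. Granting it, for each admissible pair $(\nu^1,\nu^2)$ the formula
\[
\theta_{\nu^1,\nu^2}(y)\;:=\;\forall x\bigl(\nu^1(x)\to\phi(x,y)\bigr)\,\wedge\,\forall x\bigl(\phi(x,y)\to\nu^2(x)\bigr)
\]
is $M$-definable, and the pointwise claim says the family $\{\theta_{\nu^1,\nu^2}\}$ covers $\bar M$. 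By compactness of $S_y(M)$ finitely many $\theta_1,\ldots,\theta_n$ suffice; Boolean-refining these into a partition $\psi_1,\ldots,\psi_n$ of the $y$-sort, with each $\psi_i$ assigned the pair from some $\theta_j\supseteq\psi_i$, gives the data required by the proposition.

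I prove the pointwise claim by contradiction using smoothness. Suppose it fails for some $b,\epsilon$ and set
\[
r:=\sup\{\mu(\nu):\nu\in L(M),\ \bar M\models\nu(x)\to\phi(x,b)\},\ \ s:=\inf\{\mu(\nu):\nu\in L(M),\ \bar M\models\phi(x,b)\to\nu(x)\}.
\]
Then $s-r\geq\epsilon$, so in particular $r<s$. I claim that for every $t\in[r,s]$ there is a global Keisler measure extending $\mu|_M$ and assigning mass $t$ to $\phi(x,b)$; applying this at $t=r$ and $t=s$ yields two distinct global extensions of $\mu|_M$, contradicting smoothness. To build such an extension, first define $\mu_t$ on the Boolean subalgebra $\Omega\subseteq L_x(Mb)$ generated by $L(M)\cup\{\phi(x,b)\}$: for any finite sub-Boolean-algebra $B\subseteq L(M)$, the local inner/outer bounds $r_B:=\sup\{\mu(A):A\in B,\ A\to\phi(x,b)\}$ and $s_B:=\inf\{\mu(A):A\in B,\ \phi(x,b)\to A\}$ satisfy $r_B\leq r$ and $s\leq s_B$, so $t\in[r,s]\subseteq[r_B,s_B]$, which is exactly the condition for a finitely additive extension of $\mu|_B$ to $\langle B,\phi(x,b)\rangle$ with $\phi(x,b)$-mass $t$ to exist. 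A standard compactness argument in $[0,1]^\Omega$ glues these local extensions into a finitely additive $\mu_t$ on $\Omega$, and Lemma~\ref{boolpartial} then extends $\mu_t$ to a Keisler measure over $\bar M$.

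The main obstacle is the construction of $\mu_t$ on $\Omega$; the key is recognizing that the global bounds $r$ and $s$ dominate the corresponding local bounds on every finite subalgebra, so that the prescribed mass $t\in[r,s]$ always lies in the local admissible interval and the local finitely additive extensions exist and glue by compactness. The remaining compactness-and-refinement steps that convert the pointwise claim into the uniform partition $\psi_1,\ldots,\psi_n$ are then routine.
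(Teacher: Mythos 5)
Your argument is correct. Note that the paper itself gives no proof of this proposition --- it is quoted from \cite{NIP3}, Lemma 2.3 --- so there is nothing internal to compare against; your proof is essentially the standard one. The two halves both check out: the pointwise claim follows because if the inner bound $r$ and outer bound $s$ over $L(M)$-approximations of $\phi(x,b)$ were distinct, the finite-subalgebra analysis you give (on atoms $A_i$, the achievable masses for $\phi(x,b)$ form exactly the interval $[r_B,s_B]\supseteq[r,s]$) together with compactness in $[0,1]^{\Omega}$ and Lemma \ref{boolpartial} would produce two global extensions of $\mu|_M$ disagreeing on $\phi(x,b)$, contradicting smoothness; and the passage from the pointwise statement to the finite partition via the clopen cover by the formulas $\theta_{\nu^1,\nu^2}$ and compactness of $S_y(M)$ is routine as you say.
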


Note that conversely, if the conclusion holds for all $\phi(x,y)$ and $\epsilon$, then $\mu$ is smooth.

\begin{cor}\label{smoothcor}

If $\mu$ is smooth over $N$, then:
\begin{enumerate}
\item
there is $M\prec N$ of size $|T|$ such that $\mu$ is smooth over $M$,
\item
$\mu$ is definable and finitely satisfiable in $N$ (in particular $\mu$ is generically stable),
\item
if $\lambda_y$ is a measure over $N$, then there is a unique separated amalgam $\omega_{xy}$ of $\mu_x$ and $\lambda_y$ (see Definition \ref{separ}).
\end{enumerate}
\end{cor}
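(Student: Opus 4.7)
My plan is to obtain all three parts from Proposition \ref{smoothlemma} together with the observation recorded immediately after it (which gives the converse). For (1), I apply the proposition to each pair $(\phi(x,y),1/n)$ with $\phi\in L$ and $n\geq 1$, producing in each case finitely many formulas $\nu_i^1,\nu_i^2,\psi_i\in L(N)$. Let $A\subseteq N$ collect the parameters of all these approximating formulas, so $|A|\leq |T|$, and choose $M\prec N$ of size $|T|$ with $A\subseteq M$. Then every such approximating triple already lies in $L(M)$, and the numbers $\mu(\nu_i^k)$ are determined by $\mu|_M$; the conclusion of Proposition \ref{smoothlemma} therefore holds for $\mu|_M$ at every $(\phi,1/n)$, and the remark after the proposition yields that $\mu|_M$ is smooth.

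For (2) I treat the two assertions in turn. For definability over $N$: given $\phi(x,y)\in L$ and $\epsilon>0$, Proposition \ref{smoothlemma} gives us, for every $b\in\bar M$ with $\psi_i(b)$, the bound $\mu(\phi(x,b))\in[\mu(\nu_i^1),\mu(\nu_i^2)]$, an interval of length $<\epsilon$. Hence $b\mapsto \mu(\phi(x,b))$ is uniformly within $\epsilon$ of the locally constant function on $S_y(N)$ whose pieces are the clopens cut out by the $\psi_i$. Being a uniform limit of continuous functions, $b\mapsto \mu(\phi(x,b))$ is continuous, which is the definability criterion. For finite satisfiability in $N$: if some $\phi(x;b)$ with $\mu(\phi(x;b))>0$ has no realization in $N$, the standard Hahn--Banach / compactness construction produces a global extension $\mu'$ of $\mu|_N$ that is finitely satisfiable in $N$; such a $\mu'$ must assign $\phi(x;b)$ measure $0$, contradicting the uniqueness of the global extension of $\mu|_N$. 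The parenthetical "in particular $\mu$ is generically stable" then follows from clause (1) of Theorem \ref{genstable}.

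For (3), I take a separated amalgam $\omega_{xy}$ of $\mu_x$ and $\lambda_y$ (in the sense of Definition \ref{separ}) and apply Proposition \ref{smoothlemma} to a given $\phi(x,y)\in L$ and $\epsilon>0$. On each piece $\psi_i(y)$ one has the sandwich $\nu_i^1(x)\to \phi(x,y)\to \nu_i^2(x)$, so
$$\omega(\nu_i^1(x)\wedge\psi_i(y)) \leq \omega(\phi(x,y)\wedge\psi_i(y)) \leq \omega(\nu_i^2(x)\wedge\psi_i(y)).$$
The key point is that the separation hypothesis pins down the value of $\omega$ on each rectangle $\nu_i^k(x)\wedge\psi_i(y)$ purely from the marginals $\mu$ and $\lambda$; summing over $i$ and noting that the total slack is at most $\sum_i(\mu(\nu_i^2)-\mu(\nu_i^1))\lambda(\psi_i)<\epsilon$, the value $\omega(\phi(x,y))$ is determined within $\epsilon$ by $\mu$ and $\lambda$ alone, whence uniqueness upon letting $\epsilon\to 0$. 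The main obstacle I expect is wielding the precise content of Definition \ref{separ} to extract this rectangle identification; parts (1) and (2) are essentially bookkeeping on top of Proposition \ref{smoothlemma}, while in (3) smoothness meets amalgamation and the separation condition has to do the real work.
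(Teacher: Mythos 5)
Your derivation is correct and is exactly the route the paper intends: the corollary is stated without proof here (it is attributed to \cite{NIP3}), and all three parts are meant to fall out of Proposition \ref{smoothlemma} together with the converse remark following it, which is what you do. Parts (1) and (2) are fine as written (in (2), note that the same sandwich $\mu(\nu_i^1)\leq\tilde\mu(\phi(x,b))\leq\mu(\nu_i^2)$ also gives the $N$-invariance needed before one can speak of a function on $S_y(N)$, and continuity yields the closedness of $\{p:\tilde\mu(\phi(x,b))\leq r\}$ required by the definition). The only omission is in (3): you prove uniqueness but the statement also asserts existence of a separated amalgam; this is immediate from (2) and the remark after Definition \ref{separ} ($\mu$ is $N$-invariant, so $\mu_x\otimes\lambda_y$ is a separated amalgam), but it should be said. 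Your worry about ``extracting the rectangle identification'' is unfounded: Definition \ref{separ} is literally the identity $\omega(\nu_i^k(x)\wedge\psi_i(y))=\mu(\nu_i^k(x))\lambda(\psi_i(y))$, so the summation argument closes as you describe.
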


Will also need the following fact (initially from \cite{Keisler}), that we consider as a way to \emph{realize} measures.

\begin{lemme}[\cite{NIP3}, 2.2]\label{extsmooth}
Let $\mu_x$ be a measure over $M$. Then there is an extension $M\prec N$ and a measure $\mu'_x$ over $N$ extending $\mu$ such that $\mu'$ is smooth.
\end{lemme}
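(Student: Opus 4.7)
The plan is to start from any global extension $\tilde\mu$ of $\mu$ (which exists by compactness in $\mathcal M_x(\bar M)$, or by applying Lemma \ref{boolpartial} with $\Omega = L_x(M)$) and then construct a small elementary extension $M \prec N$ such that $\tilde\mu$ is the \emph{only} global extension of $\tilde\mu|_N$. Setting $\mu' := \tilde\mu|_N$ will then be the desired smooth measure extending $\mu$.

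I would build $N$ by transfinite recursion. Set $M_0 := M$. At a successor stage $\alpha+1$: if $\tilde\mu|_{M_\alpha}$ is already smooth, stop and declare $N := M_\alpha$; otherwise pick a global measure $\nu_\alpha \neq \tilde\mu$ agreeing with $\tilde\mu$ on $M_\alpha$, choose a witness $\phi_\alpha(x,y) \in L$ and a parameter $b_\alpha \in \bar M$ with $\nu_\alpha(\phi_\alpha(x,b_\alpha)) \neq \tilde\mu(\phi_\alpha(x,b_\alpha))$, and let $M_{\alpha+1}$ be a small elementary extension of $M_\alpha$ containing $b_\alpha$. At limit stages take unions.

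The main obstacle is showing this recursion halts at some bounded $\alpha$, and for that I would exploit the sandwich characterization of smoothness in Proposition \ref{smoothlemma}: it is enough to exhibit $N$ such that for every $\phi(x,y) \in L$ and every rational $\epsilon > 0$ there exist formulas $\nu_i^1, \nu_i^2 \in L_x(N)$ and $\psi_i \in L_y(N)$ satisfying conditions (1)--(3) with respect to $\tilde\mu|_N$. Since there are only $|T|$ many such pairs, it suffices to process them one at a time, at each step adjoining finitely many new parameters to $N$. For a fixed pair $(\phi, \epsilon)$, Proposition \ref{VCappr} furnishes $a_1, \dots, a_n \in \bar M$ with $|Fr(\phi(x,b); a_1,\dots,a_n) - \tilde\mu(\phi(x,b))| < \epsilon/3$ uniformly in $b \in \bar M$; partitioning $y$-space into the $2^n$ possible $\phi(a_j, y)$-patterns yields the $\psi_i$, on each of which $\tilde\mu(\phi(x,b))$ is nearly constant.

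The genuinely hard step is then the following: inside some small elementary extension, sandwich the family $\{\phi(x,b) : \bar M \models \psi_i(b)\}$ between two formulas $\nu_i^1, \nu_i^2 \in L_x(N)$ with $\tilde\mu$-gap under $\epsilon$. This is where $NIP$ enters in an essential way, via honest definitions: the family has finite $VC$-dimension, so (after moving to a suitable small elementary extension containing new parameters) one can produce a definable lower and upper approximation whose $\tilde\mu$-measures differ by less than $\epsilon$, thanks to the concentration provided by the previous step. Closing under the finitely many parameters needed for each $(\phi,\epsilon)$ yields $N$ of cardinality $|M|+|T|$, and the converse part of Proposition \ref{smoothlemma} then gives the smoothness of $\mu' := \tilde\mu|_N$, completing the argument.
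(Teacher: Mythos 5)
This lemma is quoted in the paper from \cite{NIP3} (and ultimately from Keisler's \cite{Keisler}); the paper gives no proof of its own, so your argument has to stand on its own, and unfortunately it does not. The fatal decision is made in the very first sentence: you fix an \emph{arbitrary} global extension $\tilde\mu$ of $\mu$ once and for all, and then try to find a small $N$ over which $\tilde\mu$ is smooth. No such $N$ can exist in general. If $\tilde\mu|_N$ were smooth for some small $N$, then $\tilde\mu$ would be its unique global extension, hence a smooth global measure, hence generically stable by Corollary \ref{smoothcor}(2). A generic choice of $\tilde\mu$ is not generically stable: the paper's own Remark after Theorem \ref{sigma} gives the example of the global coheir $\tilde p$ of the type $p$ at $+\infty$ in RCF, which extends $p\in S(M)$ but is not generically stable; for this $\tilde\mu=\tilde p$ your transfinite recursion never halts (each $\tilde p|_{M_\alpha}$ is a non-realized cut type, hence never smooth), and the ``sandwich'' data you aim to produce in the second half simply does not exist over any small $N$. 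The same defect shows up concretely in your ``genuinely hard step'': knowing that $\tilde\mu(\phi(x,b))$ is nearly constant for $b\models\psi_i$ (which is all that Proposition \ref{VCappr} and the $\phi$-type partition give you) says nothing about squeezing the whole family $\{\phi(x,b):b\models\psi_i\}$ between two definable sets of almost equal measure --- the instances could all have measure $1/2$ while their intersection has measure $0$ and their union measure $1$. Producing that sandwich for every $(\phi,\epsilon)$ is \emph{equivalent} to smoothness (as noted after Proposition \ref{smoothlemma}), so at this point you are assuming the conclusion; the appeal to honest definitions does not bridge this, since they approximate a single externally definable set combinatorially, not a family measure-theoretically against a prescribed $\tilde\mu$.

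The repair requires changing the measure as you enlarge the base, not just the base. Keisler's argument runs roughly as follows: if $\mu_\alpha$ over $N_\alpha$ is not smooth, pick $\phi_\alpha$, $b_\alpha$, $\epsilon_\alpha>0$ and two extensions of $\mu_\alpha$ to $N_\alpha b_\alpha$ whose values on $\phi_\alpha(x,b_\alpha)$ differ by $\epsilon_\alpha$; let $\mu_{\alpha+1}$ (over some small $N_{\alpha+1}\ni b_\alpha$) extend their average, and take unions at limits. If this ran for $|T|^+$ steps, a pigeonhole argument would produce a single pair $(\phi,\epsilon)$ witnessing infinitely many $\epsilon$-splits along the chain, and it is here --- and only here --- that $NIP$ is used, via a counting/packing bound showing that a fixed $NIP$ formula admits only boundedly many such $\epsilon$-splits along any increasing chain of measures. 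So the process stops at some $\alpha<|T|^+$ with $\mu_\alpha$ smooth. Your write-up contains neither the ``average two disagreeing extensions'' move nor any quantitative use of $NIP$ that would bound the length of the construction.
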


Some additional facts about smooth measures can be found in Section \ref{smoothsec}.

\section{Amalgams}

We fix throughout this section the following objects: $M\prec N$ two models, with $N$ being $|M|^+$-saturated, and $\mu_x,\lambda_y$ two measures over $N$. An \emph{amalgam} of $\mu_x$ and $\lambda_y$ is simply a measure $\omega_{xy}$ extending $\mu_x \cup \lambda_y$. We are interested in the characterization of different possible amalgams, especially `free' amalgams.

The most basic property of an amalgam is independence in the sense of probability theory, which we call separation.
\begin{defi}[Separated]\label{separ}
The amalgam $\omega_{xy}$ is separated if $$\omega(\phi(x) \wedge \psi(y))=\mu(\phi(x)).\lambda(\psi(y))$$ for all $\phi(x), \psi(y) \in L(N)$.
\end{defi}

Note that if $\mu$ is $M$-invariant, then $\mu \otimes \lambda$ is separated. Also if $\mu$ or $\lambda$ is a type, then every amalgam is separated.
\\

We now go on to define when the amalgam $\omega_{xy}$ is a finitely satisfiable extension of $\mu$. A natural attempt would be to ask for example that $\omega(\theta(x,y)) \leq \sup_{a\in M} \lambda(\theta(a,y))$. However, this seems to be too weak, and we will ask for something stronger, allowing to `localize' on some clopen $\phi(x)$.

\begin{defi}[f.s.\,extension]
The amalgam $\omega_{xy}$ is an f.s.\,extension in $M$ of $\mu$ over $\lambda$ if the following holds for all $\theta(x,y), \phi(x) \in L(N)$:
$$\omega(\theta(x,y)\wedge \phi(x)) \leq \mu(\phi(x)).\sup_{a\in \phi(M)} \lambda(\theta(a,y)).$$
\end{defi}

First some basic observations. If $\omega$ is an f.s.\,extension in $M$ of $\mu$, then it is a separated amalgam (apply the definition with $\theta(x,y) = \psi(y)$ and then $\theta(x,y)=\neg \psi(y)$). Also, the existence of such an amalgam implies that $\mu$ itself is finitely satisfiable in $M$ (hence $M$-invariant).

Assume $\lambda$ is a type realized by some $a$. We can view $\omega_{xy}$ as $\omega'_x \in \mathcal M(Na)$. Then $\omega_{xy}$ is an f.s. extension in $M$ of $\mu$ if and only if $\omega'_x$ is finitely satisfiable in $M$.

In the following propositions, we use the notation $Av(X_i: i=1,..,n)$ to mean $\frac 1 n|\{i: X_i \text{ holds }\}|$.
\begin{prop}
Assume that $\mu$ is finitely satisfiable in $M$, then $\omega_{xy}=\mu_x \otimes \lambda_y$ is an f.s.\,extension (in $M$) of $\mu$ over $\lambda$.
\end{prop}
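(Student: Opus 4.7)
The strategy is to expand $\omega = \mu_x \otimes \lambda_y$ via the integral formula from the preliminaries, then replace $\mu$ by a finite average of Dirac masses at points of $M$ using the VC-style approximation (Proposition \ref{VCappr}). Finite satisfiability in $M$ is precisely what makes it legitimate to demand that the sample points lie in $M$.

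\textbf{Step 1.} Pick a small model $P$ with $M \prec P \prec N$ containing the parameters of $\theta$ and $\phi$. By definition of the invariant product,
\[
\omega(\theta(x,y) \wedge \phi(x)) \;=\; \int_{S_y(P)} \mu(\theta(x,b_p) \wedge \phi(x))\, d\lambda(p),
\]
where $b_p$ denotes any realization of $p$.

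\textbf{Step 2.} Fix $\epsilon > 0$. Apply Proposition \ref{VCappr} to $\mu$ with the formula $\theta(x,y) \wedge \phi(x)$ (absorbing parameters of $\phi$ into $y$) and the Borel set $X_1 = \phi(x)$, the Borel set $X_2=\top$. Because $\mu$ is finitely satisfiable in $M$, the approximating tuple $a_1,\dots,a_n$ can be taken inside $M$: this is the standard strengthening (a measure finitely satisfiable in $M$ lies in the closure of the convex hull of the Dirac masses on $M$, and NIP gives uniform approximation for a single formula). We thus obtain $a_1, \dots, a_n \in M$ such that, for every $b$ realizing a type in $S_y(P)$,
\[
\bigl|\mu(\theta(x,b)\wedge \phi(x)) - \tfrac{1}{n}|\{i : \theta(a_i,b)\wedge\phi(a_i)\}|\bigr| \;\le\; \epsilon,
\]
and also $|\mu(\phi(x)) - k/n| \le \epsilon$ where $k = |\{i : \phi(a_i)\}|$.

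\textbf{Step 3.} Let $I = \{i : \phi(a_i)\} \subseteq \phi(M)$, so $|I| = k$. Integrating the previous estimate against $\lambda$, interchanging the finite sum with the integral, and using $\phi(a_i)$ as a constant indicator:
\[
\omega(\theta(x,y)\wedge\phi(x)) \;\le\; \epsilon + \frac{1}{n}\sum_{i\in I} \lambda(\theta(a_i,y)) \;\le\; \epsilon + \frac{k}{n}\cdot \sup_{a \in \phi(M)} \lambda(\theta(a,y)).
\]
Since $k/n \le \mu(\phi(x)) + \epsilon$, letting $\epsilon \to 0$ yields the required inequality. The boundary case $\phi(M) = \emptyset$ is handled separately but trivially: finite satisfiability forces $\mu(\phi(x)) = 0$, hence $\omega(\theta \wedge \phi) = 0$ as well.

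\textbf{Main obstacle.} The only nontrivial point is the claim that the VC approximants can be chosen inside $M$, not merely in $\bar M$, with the uniform bound holding for all $b$ ranging over $S_y(P)$ (rather than just $b \in M$ as stated in Proposition \ref{VCappr}). This is the content of finite satisfiability: $\mu$ is in the weak closure of the convex hull of $\{\delta_a : a \in M\}$, and for a single NIP formula the convergence on the class $\{\theta(x,b)\wedge\phi(x) : b \in \bar M\}$ is uniform. Once this is granted, the rest is an honest manipulation of a product integral.
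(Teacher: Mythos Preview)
Your argument has a genuine gap at the point you yourself flag as the ``main obstacle''. The claim that the VC approximants $a_1,\dots,a_n$ can be taken in $M$ \emph{with the $\epsilon$-bound holding uniformly over all $b$} is false in general, and the justification you give (closed convex hull plus NIP) does not establish it. A counterexample: let $T$ be DLO, $M=\mathbb Q$, and let $\mu=\tilde p$ be the global coheir of the type at $+\infty$ (so $\tilde p$ is finitely satisfiable in $M$). Take $\theta(x,y)=(x<y)$ and $\phi(x)=\top$. Then $\mu(x<b)=0$ whenever $b$ is bounded above by some rational, and in particular for every $b\in M$. But for \emph{any} choice of $a_1,\dots,a_n\in\mathbb Q$, picking $b\in\mathbb Q$ with $b>\max_i a_i$ gives $Fr(a_i<b)=1$, so the uniform bound $|\mu(x<b)-Fr(a_i<b)|\le\epsilon$ fails. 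The point is that finite satisfiability lets you match $\mu$ on any \emph{finite} list of instances $\theta(x,b_1),\dots,\theta(x,b_m)$, but not on all $b$ at once; the VC theorem gives uniform approximants living in the support of $\mu$, which here consists of non-realised types.

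This is exactly why the paper proceeds differently: it first treats the case where $\mu$ is a type $p$, and there it applies the VC approximation to $\lambda$ rather than to $\mu$, producing finitely many $b_1,\dots,b_n\in N$. Finite satisfiability of $p$ in $M$ is then used only to find a single $a_0\in\phi(M)$ agreeing with $p$ on those finitely many instances $\theta(x,b_i)$. The general case reduces to this by approximating $\mu$ by types $p_1,\dots,p_n$ in its support (which are finitely satisfiable in $M$, though not realised there) and invoking the type case for each $p_k$. In short, the paper never needs approximants for $\mu$ inside $M$; it only needs to realise finitely many formulas of a finitely satisfiable type, which is exactly what finite satisfiability provides.
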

\begin{proof}
We first assume that $\mu_x = p_x$ is a type. Let $\theta(x,y) \in L(N)$ and $\phi(x) \in L(N)$ such that $p\vdash \phi(x)$, and take a small model $P \subset N$ containing $M$ and the parameters of $\theta$.

Then $\omega(\theta(x,y)) = \lambda(B)$ where $B\subseteq S(N)$ is the ($P$-invariant) Borel subset $B=\{q: p \vdash \theta(x,b) $ for some (any) $b\models q \}$. Let $\epsilon >0$. By $NIP$ (see \ref{VCappr}), there are points $b_1,..,b_n \in N$ such that:
$$|\lambda(B) - Av(b_i\in B: i=1..n)| \leq \epsilon,$$
$$\forall a\in P, ~|\lambda(\theta(a,y))-Av(\theta(a,b_i): i=1...n)| \leq \epsilon.$$

As $p$ is finitely satisfiable in $M$, there is $a_0 \in \phi(M)$ such that for every $i \in \{1,..,n\}$: 
$$p\vdash \theta(x,b_i) \leftrightarrow N \models \theta(a_0,b_i).$$

Now $p \vdash \theta(x,b_i) \iff b_i \in B$ so: 
\begin{eqnarray*}
\lambda(B) & \approx & Av(b_i \in B) \\
& = & Av(p \vdash \theta(x,b_i)) \\
& = & Av( \theta(a_0,b_i)) \\
& \approx & \lambda(\theta(a_0,y)).
\end{eqnarray*}
\noindent(Where $x \approx y$ means $|x-y|\leq \epsilon$.)

So $|\lambda(B) - \lambda(\theta(a_0,y))| \leq 2\epsilon$. As this is true for all $\epsilon >0$, and remembering $\lambda(B) = \omega(\theta(x,y))$, we deduce $\omega(\theta(x,y)) \leq \sup_{a \in \phi(M)} \lambda(\phi(a,y))$. This finishes the proof in the cas $\mu=p$.
\\

We now consider the general case.
Let as above $\theta(x,y), \phi(x) \in L(P)$.
Let $\epsilon>0$. By Proposition \ref{VCappr} we can find $p_1,…,p_n \in S(\mu)\subset S(N)$ such that:
$$\forall b \in N, |\mu(\theta(x,b))-Av(p_i\vdash \theta(x,b))|\leq\epsilon$$
and: 
$$|\mu(\phi(x))-Av(p_i \vdash\phi(x))| \leq \epsilon.$$
Note that $p_1,…,p_n$ are finitely satisfiable in $M$.
Let, for $b\in M$, $f(b) = \mu(\theta(x,b) \wedge \phi(x))$ and $f_n(b) = \frac{1}{n}. Card\{k:p_k\vdash \theta(x,b)\wedge\phi(x)\}$. Let $m$ be the number of indices $i$ for which $p_i \vdash \phi(x)$.

Then, \begin{eqnarray*}\nonumber \omega(\theta(x,y) \wedge \phi(x)) &=&
 \int f(y)d\lambda_y \\ \nonumber &\leq& \int f_n(y)d\lambda_y + \epsilon \\\nonumber
&\leq& \frac m n \sup_k \lambda(\{b\mid p_k \vdash \theta(x,b)\wedge \phi(x)\})+\epsilon \\\nonumber
&\leq& \frac m n\sup_{a\in \phi(M)} \lambda(\theta(a,y))+\epsilon
\\\nonumber &\leq& \mu(\phi(x))\sup_{a\in \phi(M)} \lambda(\theta(a,y))+2\epsilon. \end{eqnarray*}
(We use the first part of the proof to go from line 3 to 4).

As $\epsilon$ was arbitrary, we are done.
\end{proof}

We now show that if $\mu$ is finitely satisfiable in $M$, then the invariant extension is the only f.s.\,extension of $\mu$ over $\lambda$.

\begin{prop}\label{fsunique}
Assume that $\mu$ is finitely satisfiable in $M$ and $\omega_{xy}$ is an f.s.\,extension in $M$ of $\mu_x$, then $\omega_{xy}=\mu_x\otimes \lambda_y$.
\end{prop}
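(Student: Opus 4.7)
The plan is to exploit both the upper and the lower bound that the f.s.\,extension inequality encodes, and then to cut the $x$-space into finitely many definable pieces on which $a\mapsto \lambda(\theta(a,y))$ varies little. The first move is to extract a double inequality: applying the definition of f.s.\,extension to $\theta(x,y)$ and then to $\neg\theta(x,y)$, and using that $\omega$ extends $\mu$ so $\omega(\phi(x))=\mu(\phi(x))$, I obtain for every $\phi(x)\in L(N)$:
$$\mu(\phi)\cdot \inf_{a\in \phi(M)}\lambda(\theta(a,y))\;\leq\;\omega(\theta(x,y)\wedge \phi(x))\;\leq\;\mu(\phi)\cdot \sup_{a\in \phi(M)}\lambda(\theta(a,y)),$$
and, by the preceding proposition, the identical bracketing holds with $\mu_x\otimes \lambda_y$ in place of $\omega$. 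So it suffices, given $\epsilon>0$, to partition the $x$-space into $L(N)$-formulas $\chi_f(x)$ on which the oscillation of $\lambda(\theta(a,y))$ over $a\in\chi_f(M)$ is at most $2\epsilon$.

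Such a partition is produced via $NIP$. I apply Proposition \ref{VCappr} to $\lambda_y$ with the formula $\theta(x,y)$, obtaining $b_1,\ldots,b_n\in N$ such that for every $a\in N$, $\lambda(\theta(a,y))$ agrees to within $\epsilon$ with the empirical frequency $c(a):=\tfrac{1}{n}|\{j:\models\theta(a,b_j)\}|$. For each $f:\{1,\ldots,n\}\to\{0,1\}$, let $\chi_f(x)$ be the conjunction of $\theta(x,b_j)$ when $f(j)=1$ and of $\neg\theta(x,b_j)$ when $f(j)=0$. These $2^n$ formulas partition the $x$-space, and on $\chi_f(M)$ the frequency $c(a)$ is the constant $c_f:=\tfrac{1}{n}|f^{-1}(1)|$; hence $\sup$ and $\inf$ of $\lambda(\theta(a,y))$ over $\chi_f(M)$ both lie within $\epsilon$ of $c_f$.

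Combining the two ingredients, on each piece the double inequality becomes $|\omega(\theta\wedge\chi_f)-\mu\otimes\lambda(\theta\wedge\chi_f)|\leq 2\epsilon\cdot\mu(\chi_f)$; pieces with $\chi_f(M)=\emptyset$ contribute nothing, since finite satisfiability of $\mu$ in $M$ forces $\mu(\chi_f)=0$ there. Summing over $f$ yields $|\omega(\theta(x,y))-\mu\otimes\lambda(\theta(x,y))|\leq 2\epsilon$, and letting $\epsilon\to 0$ gives equality. I do not foresee any serious obstacle in this argument; the only mild subtlety is that the parameters $b_j$ extracted by the VC approximation live in $N$ rather than in $M$, but this is compatible with the f.s.\,inequality, which is stated for $\phi(x)\in L(N)$ and only restricts the suprema and infima to $M$-points.
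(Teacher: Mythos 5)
Your argument is essentially the paper's own proof: both use Proposition \ref{VCappr} to cut the $x$-space into finitely many $L(N)$-definable pieces on which the empirical frequency $\tfrac1n|\{j:\models\theta(a,b_j)\}|$ is constant (the paper groups your $2^n$ atoms $\chi_f$ into the $n+1$ level sets $B_k$), and both exploit the f.s.\ inequality for $\theta$ and for $\neg\theta$ together with $\omega(\phi)=\mu(\phi)$ to pin $\omega(\theta(x,y))$ down to within $O(\epsilon)$ of $\sum_k \tfrac kn\mu(B_k)$, then invoke the preceding proposition to identify the limit with $\mu_x\otimes\lambda_y$. The only point to tighten is your claim that the VC approximation holds ``for every $a\in N$'' with $b_j\in N$: one should apply \ref{VCappr} to $\lambda$ restricted to a small model $P\prec N$ containing $M$ and the parameters of $\theta$, and realize the resulting types inside $N$, which gives $b_j\in N$ and the approximation for all $a\in P\supseteq M$ --- exactly what your sup/inf over $\chi_f(M)$ requires.
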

\begin{proof}
Let $\theta(x,y) \in L(N)$ and let $\epsilon >0$. Let $P\subset N$ be a small model containing $M$ and the parameters of $\theta$. By \ref{VCappr} we can find $b_1,…,b_n \in N$ such that $$|\lambda(\theta(a,y))-Av(\theta(a,b_i))| \leq \epsilon \text{, for all }a \in P.$$

For all $k \in \{0,…,n\}$, let $B_k(x)$ be the formula saying: ``there are exactly $k$ values of $i$ for which $\theta(x,b_i)$ is true".

Then: \begin{align*} \omega(\theta(x,y))&= \sum_k \omega(\theta(x,y) \wedge B_k(x))
\\ &\leq \sum_k \mu(B_k(x)).\sup_{a \in B_k(M)} \lambda(\theta(a,y))
\\ & \leq \sum_k \frac k n \mu(B_k(x)) + \epsilon. \end{align*}
(We use finite satisfiability of $\mu$ on line 2). 

Similarly, $$\omega(\neg \theta(x,y)) \leq \displaystyle \sum_k \left( 1 - \frac k n \right ) \mu(B_k(x)) + \epsilon,$$ and therefore $$\left |\omega(\theta(x,y)) - \sum_k \frac k n \mu(B_k(x))\right | \leq \epsilon.$$

Letting $\epsilon \rightarrow 0$, we see that $\omega(\theta(x,y))$ is uniquely determined by $\mu$ and $\lambda$ and the fact that $\omega$ is an f.s.\,extension of $\mu$. By the previous proposition, we have $\omega_{xy}(\theta(x,y)) = \mu_x \otimes \lambda_y(\theta(x,y))$.
\end{proof}
\section{Symmetrizations}

The following construction already appears in \cite{NIP3}. Let $I=\langle a_t: t\in[0,1] \rangle$ be an indiscernible sequence indexed by $[0,1]$ (we will call this an \emph{indiscernible segment}). If $\phi(x,y)$ is a formula and $b\in \bar M$, then $NIP$ implies that $\lfloor\phi(x,b)\rfloor:=\{t \in [0,1]: \models \phi(a_t,b)\}$ is a finite union of intervals and points. Let $m$ denote the Lebesgue measure on $[0,1]$. Then we can define a global measure $\mu=Av(I)$ by $\mu(\phi(x,b))=m(\lfloor\phi(x,b)\rfloor)$. It is called the average measure of $I$.

\begin{lemme}
For any indiscernible segment $I\subset M$, the average measure $\mu=Av(I)$ is generically stable over $M$. Furthermore, for any $n$ and $\phi(x_1,..,x_n)\in L(\bar M)$, $\mu^{(n)}(\theta(x_1,..,x_n))=\int_{t_1\in [0,1]}..\int_{t_n\in [0,1]} \theta(x_{a_1},..,x_{a_n}) d t_1...d t_n$.
\end{lemme}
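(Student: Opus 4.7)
The plan is to compute $\mu^{(n)}$ explicitly as an iterated Lebesgue integral, and then read off total indiscernibility from the manifest symmetry of the resulting formula so as to apply Theorem~\ref{genstable}(2).

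First I check that $\mu$ is $M$-invariant. If $b\equiv_M b'$ and $\phi(x,y)\in L$, then every $a_t$ lies in $M$, so $\models\phi(a_t,b)\iff\models\phi(a_t,b')$; hence $\lfloor\phi(x,b)\rfloor=\lfloor\phi(x,b')\rfloor$ and the two Lebesgue measures agree. In particular the Morley products $\mu^{(n)}$ are well defined, and $\mu$ is Borel-definable over $M$ by the proposition following the definition of Borel-definability.

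Next I prove the integral formula by induction on $n$. The case $n=1$ is the definition of $Av(I)$. For the induction step, fix $\theta=\theta(x_1,\dots,x_{n+1})$ with parameters in a small model $P\supseteq M$ and unfold the invariant extension:
\[
\mu^{(n+1)}(\theta)=\int_{S(P)}f(q)\,d\mu^{(n)}(q),\qquad f(q)=\mu_{x_{n+1}}(\theta(\bar b,x_{n+1}))\text{ for }\bar b\models q,
\]
with $f$ Borel by Borel-definability of $\mu$. The inductive hypothesis, applied to formulas $\psi\in L(P)$, tells me that $\mu^{(n)}|_P$ agrees on each clopen $[\psi]\subseteq S(P)$ with the pushforward of Lebesgue measure on $[0,1]^n$ under $\bar t\mapsto\tp(a_{t_1},\dots,a_{t_n}/P)$. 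A standard monotone class argument promotes this agreement to all Borel sets, hence to integrals of bounded Borel functions:
\[
\int_{S(P)}g\,d\mu^{(n)}=\int_{[0,1]^n}g\bigl(\tp(a_{t_1},\dots,a_{t_n}/P)\bigr)\,dt_1\cdots dt_n.
\]
Specialising to $g=f$, rewriting $f(\tp(a_{\bar t}/P))=\int_{[0,1]}\theta(a_{t_1},\dots,a_{t_{n+1}})\,dt_{n+1}$, and invoking ordinary Fubini finishes the induction.

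With the formula in hand, total indiscernibility is a one-line change of variables: for every permutation $\sigma$, the substitution $s_i=t_{\sigma(i)}$ gives
\[
\mu^{(n)}(\theta(x_{\sigma(1)},\dots,x_{\sigma(n)}))=\int_{[0,1]^n}\theta(a_{t_{\sigma(1)}},\dots,a_{t_{\sigma(n)}})\,d\bar t=\mu^{(n)}(\theta),
\]
so $\mu^{(\omega)}|_M$ is totally indiscernible and Theorem~\ref{genstable}(2) yields generic stability of $\mu$ over $M$. The only step requiring real care is the measure-theoretic extension from clopens to Borel sets in the pushforward identity, but as both sides are regular Borel probability measures on the compact totally disconnected space $S(P)$ agreeing on a generating boolean algebra, this is routine.
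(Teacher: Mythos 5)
Your proposal is correct and follows essentially the same route as the paper: the paper's proof simply notes finite satisfiability by construction, asserts that the integral formula "is easy to check directly from the definition," and reads off generic stability from the symmetry of that formula via Theorem~\ref{genstable}(2); your induction on $n$ unfolding $\mu^{(n+1)}=\mu_{x_{n+1}}\otimes\mu^{(n)}$ is exactly the verification being left to the reader there. The only cosmetic difference is that you check $M$-invariance directly where the paper invokes finite satisfiability in $M$; both suffice to make the Morley products well defined.
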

\begin{proof}
First notice that $\mu$ is finitely satisfiable in $M$ by construction. It is easy to check directly from the definition that the formula given for $\mu^{(n)}$ is valid. From this, it follows that $\mu^{(n)}$ is symmetric for all $n$, therefore $\mu$ is generically stable.
\end{proof}

If $p \in S(M)$ is any type. Let $\tilde p$ be an $M$-invariant global extension of $p$ (for example, a coheir). Let $I$ be a Morley segment of $\tilde p$ (i.e., a Morley sequence indexed by $[0,1]$). Then $Av(I)$ is a generically stable measure and extends $p$. Note that if $p$ is already generically stable, then $\mu=p$. In general, by the previous lemma, the Morley sequence of $\mu$ is a \emph{symmetrization} of the Morley sequence of $p$, i.e. $\mu^{(n)}|_M$ is the average over all permutation of variables of $p^{(n)}|_M$:
$$\mu^{(n)}(\theta(x_1,..,x_n))=\frac 1{n!} \sum_{\sigma \in \mathfrak S_n} p^{(n)}( \theta(x_{\sigma(1)},..,x_{\sigma(n)})),$$
for $\theta(x_1,..,x_n)\in L(M)$. For this reason, we will call $\mu$ a symmetrization of $p$.
\\

Our aim now is to define the same construction starting with a measure instead of a type $p$. So let $\mu\in \mathcal M(M)$ be any measure. Take $\tilde \mu$ a global $M$-invariant extension of $\mu$. We have defined the Morley sequence $\tilde \mu^{(\omega)}_{x_1x_2...}$ of $\tilde \mu$ and we can analogously define its Morley segment $\tilde \mu^{([0,1])}_{\bar x}$ where $\bar x$ stands for $\langle x_t:t\in [0,1]\rangle$. Now let $\nu_{\bar x}$, be a smooth extension of $\tilde \mu^{([0,1])}$ (this is the analogue of taking a realization of a Morley segment of $p$). For any $b\in \bar M$, consider the function $f_{\phi(x,b)}: t \mapsto \nu(\phi(x_t,b))$. By Proposition \ref{by}, this function has only countably many points of discontinuity. It is therefore integrable on $[0,1]$.

We can consider the integral $\mu_{\Sigma}(\phi(x,b))=\int_{t\in [0,1]} \nu(\phi(x_t,b)) dt$. This defines a global Keisler measure extending $\mu$.

\begin{prop}
The measure $\mu_{\Sigma}$ constructed above is generically stable. Furthermore, for every $n$, and $\theta(x_1,..,x_n)\in L(M)$, $$\mu_{\Sigma}^{(n)}(\theta(x_1,..,x_n))= \frac 1 {n!} \sum_{\sigma \in \mathfrak S_n}\tilde \mu^{(\omega)}(\theta(x_{\sigma(1)},..,x_{\sigma(n)})).$$
\end{prop}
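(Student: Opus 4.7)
The strategy is to prove the explicit formula for $\mu_\Sigma^{(n)}$ directly; since its right-hand side is manifestly symmetric in $x_1,\ldots,x_n$, this yields total indiscernibility of $\mu_\Sigma^{(\omega)}|_M$ and hence generic stability of $\mu_\Sigma$ via clause~(2) of Theorem \ref{genstable}. Without loss of generality I may take $\tilde\mu$ to be finitely satisfiable in $M$ (for example a coheir of $\mu$), and this makes $\mu_\Sigma$ itself finitely satisfiable in $M$: if $\mu_\Sigma(\phi)>0$ then $\nu(\phi(x_{t_0}))>0$ for some $t_0$, but $\nu|_{x_{t_0}}=\tilde\mu$, so $\tilde\mu(\phi)>0$ and there is a witness in $M$.

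Define the candidate $n$-variable measure
$$\eta_n(\theta(x_1,\ldots,x_n)) := \int_{[0,1]^n} \nu\bigl(\theta(x_{t_1},\ldots,x_{t_n})\bigr)\, dt_1\cdots dt_n,$$
which is well-defined using Proposition \ref{by} (to control the discontinuities of the integrand in each variable) together with Fubini, and is a Keisler measure by Lemma \ref{boolpartial}. Decompose $[0,1]^n$, up to a null set, into the $n!$ open simplices $\Delta_\sigma = \{(t_1,\ldots,t_n) : t_{\sigma(1)} < \cdots < t_{\sigma(n)}\}$, each of Lebesgue measure $1/n!$. On $\Delta_\sigma$ the tuple $(x_{t_1},\ldots,x_{t_n})$ picks off the Morley segment in the order encoded by $\sigma$, so $\nu(\theta(x_{t_1},\ldots,x_{t_n}))$ is constant there and equal to $\tilde\mu^{(\omega)}(\theta(x_{\sigma^{-1}(1)},\ldots,x_{\sigma^{-1}(n)}))$. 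Summing over the simplices and reindexing via the bijection $\sigma\mapsto\sigma^{-1}$ of $\mathfrak{S}_n$ recovers exactly the formula in the statement.

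It then remains to identify $\eta_n$ with $\mu_\Sigma^{(n)}$, which I carry out by induction on $n$, the base case $n=1$ being the definition of $\mu_\Sigma$. For the step, view $\eta_{n+1}$ as an amalgam of $\mu_\Sigma$ in the variable $x_1$ with $\eta_n$ in $(x_2,\ldots,x_{n+1})$; the symmetry of $\eta_{n+1}$ already visible from the simplex evaluation ensures the marginals are as needed. Since $\mu_\Sigma$ is finitely satisfiable in $M$, Proposition \ref{fsunique} reduces the identification to showing that $\eta_{n+1}$ is an f.s.\,extension of $\mu_\Sigma$ over $\eta_n$; the required inequality holds pointwise in $(t_2,\ldots,t_{n+1})$ thanks to finite satisfiability of $\nu$ (smooth measures being finitely satisfiable in the model they are smooth over, by Corollary \ref{smoothcor}(2)), and survives integration. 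The main obstacle I anticipate is precisely this inductive step --- teasing the iterated $\otimes$-structure of $\mu_\Sigma^{(n+1)}$ out of a single integral over $[0,1]^{n+1}$ --- but once it is in place, the formula, and with it generic stability, follow immediately.
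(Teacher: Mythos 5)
Your overall architecture is the same as the paper's: define $\eta_n$ by integrating $\nu$ over $[0,1]^n$, identify $\eta_n=\mu_\Sigma^{(n)}$ by induction via Proposition \ref{fsunique}, and read off the symmetric formula (hence generic stability). But both pivots of your argument have genuine problems. First, the reduction ``WLOG $\tilde\mu$ is a coheir'' is not available: the measure $\mu_\Sigma$ and the displayed formula both depend on the given $\tilde\mu$ (and on $\nu$), so the statement must be proved for an arbitrary $M$-invariant $\tilde\mu$ --- indeed the application to $G(M)$-invariant measures later in the paper uses a $\tilde\mu$ produced by Keisler's construction, not a coheir. Moreover your finite-satisfiability argument rests on $\nu|_{x_{t_0}}=\tilde\mu$ as global measures; but $\nu$ only agrees with the Morley segment over $M$ (if it agreed over $\bar M$, then $\nu(\phi(x_t,b))=\tilde\mu(\phi(x,b))$ for every $b\in\bar M$ and the construction would collapse to $\mu_\Sigma=\tilde\mu$). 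In general $\mu_\Sigma$ is not even $M$-invariant, hence certainly not finitely satisfiable in $M$; what is true, and what the paper uses, is that $\mu_\Sigma$ is finitely satisfiable in a small model $P\supseteq M$ over which $\nu$ is smooth, and Proposition \ref{fsunique} has to be applied with $P$ as the small model.

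Second, and more seriously, your inductive step as sketched does not go through. The f.s.\,extension inequality you need is $\eta_{n+1}(\theta\wedge\phi(x_1))\leq\mu_\Sigma(\phi)\cdot\sup_{a\in\phi(P)}\eta_n(\theta(a,\bar y))$, where the supremum is of the \emph{already integrated} quantity $\eta_n(\theta(a,\bar y))$. A bound that ``holds pointwise in $(t_2,\ldots,t_{n+1})$'' yields, after integrating those variables, a factor $\int\sup_{a\in\phi(P)}\nu(\theta(a,x_{t_2},\ldots,x_{t_{n+1}}))\,d\bar t$, and since $\int\sup\geq\sup\int$ this is the wrong direction: finite satisfiability of $\nu$ alone cannot rescue the interchange. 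The paper's proof avoids this by first integrating out the other coordinates and identifying the resulting measure, e.g. $\omega(\theta(\bar x,y))=\int\nu(\theta(\bar x,x_{t'}))\,dt'$, with the product $\mu_{\Sigma,y}\otimes\nu_{\bar x}$, using the fact that the smooth measure $\nu$ admits a unique separated amalgam with any measure (Corollary \ref{smoothcor}); only then is the f.s.\,inequality invoked, with the sup taken of $\mu_\Sigma$- (respectively $\eta_n$-) values, and the final integration over the localized coordinate done last. This unique-amalgamation step is the real content of the induction --- precisely the obstacle you flag --- and it is absent from your sketch; citing Corollary \ref{smoothcor}(2) for finite satisfiability of $\nu$ does not produce the required product structure. (The simplex decomposition giving the symmetric formula for $\theta\in L(M)$ is fine, modulo doing the measurability bookkeeping with iterated integrals as the paper does.)
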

\begin{proof}
Let $P$ be a small model containing $M$ over which $\nu$ is smooth. Then $\nu$ is finitely satisfiable in $P$, and it follows that $\mu_{\Sigma}$ is also finitely satisfiable in $P$. 
Define an $n$-ary global measure $\lambda^n_{x_1..x_n}$ by 
$$\lambda^n(\theta(x_1,..,x_n))= \int_{t_1\in [0,1]}..\int_{t_n \in [0,1]} \nu(\theta(x_{t_1},..,x_{t_n}) )dt_1..dt_n,$$
for any formula $\theta(x_1,..,x_n)\in L(\bar M)$.

We will show by induction that $\lambda^n=\mu_{\Sigma}^{(n)}$ for all $n$. The second assertion of the proposition will follow immediately by direct computation (remembering $\tilde \mu^{(\omega)}|_M = \nu |_M$). Then the first assertion follows since the expression given for $\mu^{(n)}_{\Sigma}$ is symmetric. We show that $\lambda^n$ defines a f.s.\,extension in $P$ of $\mu_{\Sigma}$, and is therefore, by \ref{fsunique} and induction equal to $\mu^{(n)}_{\Sigma}$.

For simplicity of notations, we write the details only for $n=2$ (the case $n=1$ being true by definition). Let $\theta(x,y),\phi(x) \in L(\bar M)$. The transformations are explained below.

\begin{align*} \lambda^2(\theta(x,y)\wedge \phi(x))& = \int_{t\in [0,1]}\int_{t'\in[0,1]} \nu_{\bar x}(\theta(x_{t},x_{t'})\wedge \phi(x_t))dt' dt
\\ &=\int_{t\in [0,1]} \mu_{\Sigma,y} \otimes {\nu_{\bar x}}(\theta(x_t,y) \wedge \phi(x_t)) dt
\\ &\leq \sup_{a \in \phi(P)} \mu_{\Sigma,y}(\theta(a,y)) \times \int_{t \in [0,1]}\nu_{\bar x}(\phi(x_t))dt
\\ & \leq \mu_{\Sigma,x}(\phi(x)). \sup_{a \in \phi(P)} \mu_{\Sigma,y}(\theta(a,y)).
\end{align*}

Explanation: $\nu$ is a smooth measure, so by \ref{smoothcor}, it admits a unique separated amalgam with any other measure. In particular with $\mu_{\Sigma}$. The measure $\omega_{\bar xy}$ defined by $\omega(\theta(\bar x,y))=\int_{t'\in[0,1]} \nu(\theta(\bar x,x_{t'}))dt$ is such an amalgam. Therefore it is equal to $\mu_{\Sigma,y}\otimes \nu_{\bar x}$. This justifies the second line.

As $\mu_{\Sigma}$ is finitely satisfiable in $P$, $\mu_{\Sigma,y}\otimes \nu_{\bar x}$ is an f.s.\,extension of $\mu_{\Sigma}$ over $\nu$ (in $P$); this explains the third line. The forth line is just the definition of $\mu_{\Sigma}$.

\end{proof}

If $\mu$ is a measure over $M$, a \emph{symmetrization} of $\mu$ is a global extension $\mu_{\Sigma}$ of $\mu$ obtaining by applying the construction above with some $\tilde \mu$ and $\nu$. If $\mu$ is a global $M$-invariant measure, we will say that $\mu_{\Sigma}$ is a symmetrization of $\mu$ over $M$ if it is obtained by the construction above where $\mu$ and $\tilde \mu$ there stand for $\mu|_M$ and $\mu$ respectively.

Note that if we build $\mu_{\Sigma}$ from a Morley sequence of $\tilde \mu$ over $M$, then $\mu_{\Sigma}$ is in general not $M$-invariant. In fact, it is $M$-invariant if and only if $\mu$ is generically stable, in which case $\mu_{\Sigma}=\mu$ (as explained below).

\begin{prop}

Let $\mu_x$ be a global $M$-invariant measure, and $\mu_{\Sigma}$ a symmetrization of $\mu$ over $M$. Let also $f$ be an $M$-definable function whose domain and range is in the same sort as the variable $x$.
\begin{enumerate}
\item If $\mu$ is generically stable, then $\mu_{\Sigma}=\mu$,
\item $f_*(\mu_{\Sigma})$ is a symmetrization of $f_*(\mu)$,
\item If $f_*(\mu)=\mu$, then $f_*(\mu_{\Sigma})=\mu_{\Sigma}$.
\end{enumerate}
\end{prop}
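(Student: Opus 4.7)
My plan is to read off all three items from the explicit formula for $\mu_\Sigma^{(n)}$ in the preceding proposition, combined with how the construction interacts with the coordinatewise pushforward $F := f\times f\times\cdots$ of $f$ acting on the tuple variable $\bar x$. For (1), take $\tilde\mu = \mu$ in the construction, legitimate since a global $M$-invariant measure is its own $M$-invariant extension. The formula then reads, for $\theta\in L(M)$,
$$\mu_\Sigma^{(n)}(\theta) = \frac{1}{n!}\sum_{\sigma\in\mathfrak S_n} \mu^{(\omega)}(\theta_\sigma).$$
Generic stability of $\mu$ gives total indiscernibility of $\mu^{(\omega)}|_M$ by Theorem~\ref{genstable}(2), so every summand equals $\mu^{(n)}(\theta)$, yielding $\mu_\Sigma^{(n)}|_M = \mu^{(n)}|_M$ for each $n$ and in particular $\mu_\Sigma|_M = \mu|_M$. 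Since both $\mu$ and $\mu_\Sigma$ are generically stable, and generically stable measures are determined by their restriction to any small model over which they are Borel-definable, this agreement on $L(M)$ forces $\mu_\Sigma = \mu$.

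For (2), let $(\tilde\mu,\nu)$ be the data used to build $\mu_\Sigma$. The candidate pair for a symmetrization of $f_*\mu$ is $(f_*\tilde\mu,\, F_*\nu)$: the first is a global $M$-invariant extension of $f_*\mu$ because $f$ is $M$-definable, and pushforward commutes with the Morley-segment construction, so $F_*\nu$ extends $(f_*\tilde\mu)^{([0,1])}$. Smoothness of $F_*\nu$ follows by transporting the partition characterization of Proposition~\ref{smoothlemma} through the definable map $F$. A direct substitution
$$(f_*\mu)_\Sigma(\phi(y,b)) = \int_t F_*\nu(\phi(y_t,b))\,dt = \int_t \nu(\phi(f(x_t),b))\,dt = f_*\mu_\Sigma(\phi(y,b))$$
identifies $f_*\mu_\Sigma$ with this particular symmetrization of $f_*\mu$.

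For (3), assume $f_*\mu = \mu$ and take $\tilde\mu = \mu$, so $f_*\tilde\mu = \tilde\mu$ and therefore $F_*\tilde\mu^{([0,1])} = \tilde\mu^{([0,1])}$. Since $\nu|_{\bar M} = \tilde\mu^{([0,1])}$, this forces $F_*\nu$ and $\nu$ to coincide on $L(\bar M)$, and the same substitution as in (2) collapses to $f_*\mu_\Sigma(\phi(x,b)) = \mu_\Sigma(\phi(x,b))$. The main obstacle is the technical claim used in (2) that pushforward of a smooth measure by a definable function is smooth; once this is granted, the remaining steps are routine unravellings of the definitions.
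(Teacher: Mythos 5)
Your part (2) is correct and is exactly the paper's argument: the pair $(f_*\tilde\mu, F_*\nu)$ witnesses that $f_*(\mu_\Sigma)$ is a symmetrization of $f_*(\mu)$, with smoothness of $F_*\nu$ supplied by the pushforward-of-smooth-is-smooth lemma (Lemma \ref{pf} in the paper). Parts (1) and (3), however, both contain genuine gaps, and both stem from the same misreading of the construction: $\nu_{\bar x}$ is \emph{not} a global measure agreeing with $\tilde\mu^{([0,1])}$ over all of $\bar M$; it is a smooth measure extending only the \emph{restriction} of $\tilde\mu^{([0,1])}$ to a small model (this is forced by Lemma \ref{extsmooth}, and the paper uses only $\nu|_M=\tilde\mu^{(\omega)}|_M$). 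If $\nu$ really extended the global Morley segment, then $\mu_\Sigma(\phi(x,c))=\int\nu(\phi(x_t,c))\,dt$ would equal $\mu(\phi(x,c))$ for every $c\in\bar M$ and every $\mu$, the whole section would be vacuous, and the paper's remark that $\mu_\Sigma$ is $M$-invariant only when $\mu$ is generically stable would be false.

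In (1), your computation correctly yields $\mu_\Sigma^{(n)}|_M=\mu^{(n)}|_M$, but the jump to $\mu_\Sigma=\mu$ does not go through. A measure is determined by its restriction to $M$ only if it is \emph{definable} over $M$ (Borel-definability is not enough: distinct $M$-invariant extensions of a single type over $M$ are all Borel-definable over $M$ and agree on $L(M)$). Here $\mu$ is definable over $M$, but $\mu_\Sigma$ is a priori only finitely satisfiable in, hence definable over, the larger model $P$ on which $\nu$ is smooth; its $M$-invariance is precisely what is at stake (the paper derives it as a \emph{consequence} of item (1)). The paper avoids the circle by arguing directly over $\bar M$: total indiscernibility of $\mu^{(\omega)}$ plus Proposition \ref{by} show that for each $\phi(x,c)$ with $c\in\bar M$ and each $\epsilon>0$, the value $\nu(\phi(x_t,c))$ differs from $\mu(\phi(x,c))$ by more than $\epsilon$ for only finitely many $t$, so the defining integral of $\mu_\Sigma$ returns $\mu(\phi(x,c))$. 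In (3), the step ``$F_*\nu$ and $\nu$ coincide on $L(\bar M)$'' fails for the reason above: they coincide only over the small model, and that does not give $\int\nu(\phi(f(x_t),b))\,dt=\int\nu(\phi(x_t,b))\,dt$ for $b\in\bar M$. This is why the paper introduces the family $\nu^I$, proves that each $\nu^I|_M$ is still an indiscernible segment, and again invokes Proposition \ref{by} to conclude that $|\nu(\phi(x_t))-\nu(\phi(f(x_t)))|>\epsilon$ for only finitely many $t$; some such appeal to the quantitative NIP convergence statement for formulas with parameters in $\bar M$ is unavoidable in both (1) and (3).
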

\begin{proof}
1. If $\mu$ is generically stable, then $\mu^{(\omega)}$ is totally indiscernible. It follows by \ref{by} that for every $\phi(x,c)\in L(\bar M)$ and $\epsilon>0$, the set $\{t\in [0,1]: |\nu(\phi(a_t,c))-\tilde \mu(\phi(a,c))|>\epsilon\}$ is finite. Therefore the definition of $\mu_\Sigma$ implies that $\mu_\Sigma =\mu$.

2. Clear: $f_*(\mu_\Sigma)$ is a symmetrization of $f_*(\mu)$ built using $f_*(\tilde \mu)$ and $f_*(\nu)$.

3. Let $\nu_{\bar x}$ be as in the construction of $\mu_{\Sigma}$. For $I\subseteq [0,1]$, define the measure $\nu^I_{\bar x}$ by $\nu^I(\phi(x_{t_1},..,x_{t_n}))=\nu^I(\phi(f^I_{t_1}(x_{t_1}),..,f^I_{t_n}(x_{t_n})))$ where $\phi(x_1,..,x_n)\in L_{\bar x}(\bar M)$ and $f^I_{t}=f$ if $t\in I$ and is the identity otherwise.

\vspace{4pt}
\underline{Claim}: For every $I$, $\nu^I$ is the measure of an $M$-indiscernible segment.
\vspace{4pt}
\begin{proof}
The claim concerns only the restriction to $M$ (indeed to $\emptyset$) of the measure $\nu^I$. Note that $\nu|_M$ is just $\mu^{([0,1])}|_M$, and as $f$ is $M$ definable, the property we need to check does not depend on the choice of the smooth extension $\nu$. It is enough to show that $\mu^{(\omega)} (\theta(x_1,..,x_n))=\mu^{(\omega)}(\theta(g_1(x_1),..,g_n(x_n)))$, for every $\theta(x_1,...,x_n)\in L(M)$, where $g_i$ is either $f$ or the identity. This is easily checked by direct computation and induction on $n$. For example, to see that $\mu^{(2)}(\theta(f(x_1),f(x_2)))=\mu^{(2)}(\theta(x_1,x_2))$, write
\begin{eqnarray*} \mu^{(2)}_{x_1x_2}(\theta(f(x_1),f(x_2))&=& \int \mu_{x_2}(\theta(f(a),f(x_2)))) d\mu_a\\
&=& \int f_*\mu_{x_2}(\theta(b,x_2)) df_*\mu_b\\
&=& \int \mu_{x_2}(\theta(b,x_2)) d\mu_b\\
&=& \mu_{x_1x_2}^{(2)}(\theta(x_1,x_2)).\end{eqnarray*}
\newcommand{\halmos}{\rule{1.5mm}{2.5mm}} 
\renewcommand{\qedsymbol}{\halmos} 

\end{proof}

It follows by Proposition \ref{by} that for every $\phi(x)\in L(\bar M)$ and $\epsilon$, there can be only finitely many values of $t\in [0,1]$ for which $|\nu(\phi(x_t))-\nu(\phi(f(x_t)))|>\epsilon$. Hence the average of $\nu_{\bar x}$ is the same as the one of $\nu^I_{\bar x}$ for each $I$. As $f_*(\mu_{\Sigma})$ is the average of $\nu^I$ for $I=[0,1]$, we have $f_*(\mu_{\Sigma})=\mu_{\Sigma}$.
\end{proof}

As an application, we give a short proof of a result from \cite{NIP2}, section 7. (It is not stated explicitly there, as the notion of generically stable measure had not been introduced yet, but is the content of the pages from Lemma 7.1 to Lemma 7.6.) If $G$ is a definable group and $M$ a model, by a measure $\mu_x$ being $G(M)$-invariant, we mean that $\mu$ concentrates on $G$ (i.e. $\mu(x\in G)=1$) and for each $g\in G(M)$ and $\phi(x)$ a formula, $\mu(\phi(x))=\mu(\phi(g.x))$.

\begin{prop}
Let $M$ be a model,  and $G$ a definable group. Assume there is $\mu\in \mathcal M(M)$ a $G(M)$-invariant measure. Then $\mu$ extends to a global generically stable $G(M)$-invariant measure.
\end{prop}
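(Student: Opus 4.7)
The plan is to apply the symmetrization construction of this section to a suitably invariant global extension of $\mu$. First, I would construct a global measure $\tilde{\mu} \in \mathcal{M}(\bar{M})$ that extends $\mu$ and is both $M$-invariant and $G(M)$-invariant. Starting from any $M$-invariant global extension $\tilde{\mu}_0$ of $\mu$ (for instance, a coheir), average against $\mu$ itself by setting
\[
\tilde{\mu}(\phi(x,b)) \;=\; \int \tilde{\mu}_0(\phi(a^{-1} x,\, b)) \, d\mu(a).
\]
Left-invariance of $\mu$ then forces $\tilde{\mu}$ to be $G(M)$-invariant: for $g \in G(M)$, the substitution $a \mapsto g a$ shows $\tilde{\mu}(\phi(g x, b)) = \tilde{\mu}(\phi(x, b))$. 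Moreover $\tilde{\mu}$ still restricts to $\mu$, since for $b \in M$ the $G(M)$-invariance of $\mu$ makes the integrand constant equal to $\mu(\phi(x, b))$; $M$-invariance of $\tilde{\mu}$ is inherited from that of $\tilde{\mu}_0$.

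Once $\tilde{\mu}$ is in hand, apply the construction of this section to obtain $\mu_{\Sigma}$, a symmetrization of $\tilde{\mu}$ over $M$. By the previous proposition, $\mu_{\Sigma}$ is a global generically stable measure and it extends $\mu$. For each $g \in G(M)$, the left-translation map $f_g : x \mapsto g x$ is $M$-definable and satisfies $(f_g)_*(\tilde{\mu}) = \tilde{\mu}$ by $G(M)$-invariance of $\tilde{\mu}$. Part 3 of the previous proposition then yields $(f_g)_*(\mu_{\Sigma}) = \mu_{\Sigma}$, which is precisely $G(M)$-invariance of $\mu_{\Sigma}$, and we are done.

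The main obstacle is the first step, where the averaging integral needs to be made rigorous. The integrand $a \mapsto \tilde{\mu}_0(\phi(a^{-1} x, b))$ factors through $\mathrm{tp}(a/M)$ only when $\tilde{\mu}_0$ is invariant over a small model containing the parameter $b$. One therefore either chooses $\tilde{\mu}_0$ invariant over such a model for each $(\phi, b)$, or more uniformly lifts $\mu$ to a measure over a model containing $b$ and integrates there; one then verifies that the value is independent of the auxiliary choices, and that the resulting $\tilde{\mu}$ is genuinely finitely additive (formally via Fubini). With $\tilde{\mu}$ constructed, everything else is a direct application of the machinery already developed in this section.
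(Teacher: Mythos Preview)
Your overall strategy matches the paper's: obtain a global extension $\tilde\mu$ of $\mu$ that is simultaneously $G(M)$-invariant and invariant over some small model, then symmetrize; generic stability comes from the symmetrization proposition, and $G(M)$-invariance of $\mu_\Sigma$ from part~(3) of the preceding proposition applied to each left-translation $f_g$. That second half is exactly what the paper does.

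The difficulty is entirely in the first step, and here your averaging construction has a genuine gap. The integral $\int \tilde\mu_0(\phi(a^{-1}x,b))\,d\mu(a)$ is taken against $\mu\in\mathcal M(M)$, hence over $S_a(M)$; but by $M$-invariance of $\tilde\mu_0$ the integrand factors only through $\tp(a,b/M)$, and for $b\notin M$ this is \emph{not} a function of $\tp(a/M)$. Your own diagnosis is slightly off: assuming $\tilde\mu_0$ invariant over some $P\ni b$ makes the integrand a function of $\tp(a/P)$, which is still finer than $\tp(a/M)$. The proposed fixes are then essentially circular: ``lift $\mu$ to a measure over a model containing $b$'' is exactly the problem of producing a well-behaved extension of $\mu$, and letting the lift (or $\tilde\mu_0$) vary with $(\phi,b)$ gives no reason to expect independence of choices or finite additivity. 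If one instead interprets your formula as the honest product $(\tilde\mu_0)_x\otimes(\tilde\mu_0)_a(\phi(a^{-1}x,b))$, the change of variable $a\mapsto ga$ needed for $G(M)$-invariance requires $(f_g)_*\tilde\mu_0=\tilde\mu_0$ \emph{globally}, which is precisely what you do not yet have.

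The paper sidesteps all of this by invoking Keisler's smooth-extension construction (\cite{NIP2},~7.6): one finds $M_1\succ M$ and a $G(M)$-invariant extension $\mu'$ of $\mu$ over $M_1$ which is \emph{smooth}. Its unique global extension $\tilde\mu$ is then $M_1$-invariant, and it is $G(M)$-invariant for free: for $g\in G(M)$ the measure $(f_g)_*\tilde\mu$ is another global extension of $(f_g)_*\mu'=\mu'$, hence equals $\tilde\mu$ by uniqueness. One then symmetrizes over $M_1$ (not over $M$), and from that point your argument goes through verbatim.
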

\begin{proof}
First, find a global extension $\tilde \mu$ of $\mu$ that is $G(M)$-invariant, and $M_1$-invariant for some small $M_1\supset M$. This can be done with Keisler's smooth measure construction, see \cite{NIP2}, 7.6. Let $\mu_{\Sigma}$ be a symmetrization of it over $M_1$. Then by the previous propositions, $\mu_{\Sigma}$ is generically stable and $G(M)$-invariant.
\end{proof}

\section{Smooth measures}\label{smoothsec}

The goal of this section is to prove that generically stable measures in o-minimal theories or in the theory of the p-adics are smooth.

We start by giving a characterization of smoothness which will be useful for proving that measures are smooth. Let $M\models T$ and let $\phi(x,y)\in L(M)$. For $b \in \bar M$, we define the \emph{border} of $\phi(x,b)$ (over $M$) as $\partial_x^M\phi(x,b) = \{p\in S_x(M): $ there are $a,a' \models p$ such that $\phi(a,b) \wedge \neg \phi(a',b)$ holds $\}$. This is a closed subset of the space of types $S_x(M)$. We will often omit $x$ and $M$ in the notation. Note that $\partial^M\phi(x,b)$ depends only on $q=\tp(b/M)$, so we will also sometimes write $\partial^M\phi(x,q)$ for $\partial^M\phi(x,b)$.

We will also be needing the notion of a \emph{localization} of a measure $\mu_x \in \mathcal M(M)$. Let $F\subseteq S_x(M)$ be a closed subspace such that $\mu(F)>0$. Then we define the localization of $\mu$ at $F$ by $\mu_F(\phi(x))=\mu(\phi(x) \cap F))/\mu(F)$ for all $\phi(x) \in L(M)$. This is again a Keisler measure over $M$.

\begin{lemme}
Let $\mu \in \mathcal M_x(N)$ and $F\subset S_x(N)$ a closed subset such that $\mu(F)>0$.
If $\mu$ is smooth, then $\mu_F$ is smooth.

\end{lemme}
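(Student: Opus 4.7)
The plan is to invoke the converse of Proposition \ref{smoothlemma} stated in the remark following it: $\mu_F$ is smooth provided that for every $\phi(x,y)\in L$ and every $\epsilon>0$ there is a finite partition $\{\psi_i(y)\}$ of the $y$-space and formulas $\nu_i^1(x),\nu_i^2(x)\in L_x(N)$ with $\psi_i(b)\Rightarrow(\nu_i^1(x)\to\phi(x,b)\to\nu_i^2(x))$ and $\mu_F(\nu_i^2)-\mu_F(\nu_i^1)<\epsilon$ for every $i$. A preliminary check is that $\mu_F$ really is a Keisler measure over $N$, which is immediate since $\phi(x)\mapsto \mu(\phi(x)\cap F)/\mu(F)$ is a normalized, finitely additive probability on $L_x(N)$, using $\mu(F)>0$.

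The strategy is simply to borrow the witnesses from $\mu$. Given $\phi$ and $\epsilon$, I set $\epsilon':=\epsilon\cdot\mu(F)>0$ and apply Proposition \ref{smoothlemma} to the smooth measure $\mu$ with tolerance $\epsilon'$. This yields $\nu_i^1,\nu_i^2,\psi_i\in L(N)$ meeting conditions (1) and (2) (which are purely logical and indifferent to the choice of measure) together with $\mu(\nu_i^2)-\mu(\nu_i^1)<\epsilon'$. I claim the same formulas witness smoothness of $\mu_F$ for $\phi$ and $\epsilon$: conditions (1) and (2) are unchanged, and for (3), using $\nu_i^1(x)\to \nu_i^2(x)$ (a consequence of (2), after discarding any $i$ with $\psi_i$ inconsistent), I compute
\[
\mu_F(\nu_i^2)-\mu_F(\nu_i^1)\;=\;\frac{\mu((\nu_i^2\setminus\nu_i^1)\cap F)}{\mu(F)}\;\leq\;\frac{\mu(\nu_i^2)-\mu(\nu_i^1)}{\mu(F)}\;<\;\frac{\epsilon'}{\mu(F)}\;=\;\epsilon.
\]

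I do not anticipate any real obstacle; the whole argument is a one-line reduction to Proposition \ref{smoothlemma}. The only point deserving attention is the harmless rescaling of the target tolerance by the positive constant $\mu(F)$ before invoking the smoothness approximation for $\mu$.
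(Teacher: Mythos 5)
Your argument is correct, but it takes a different route from the paper. You reduce everything to the approximation characterization of smoothness: you pull the witnesses $\nu_i^1,\nu_i^2,\psi_i$ for $\mu$ at tolerance $\epsilon\cdot\mu(F)$ straight from Proposition \ref{smoothlemma}, observe that conditions (1) and (2) do not mention the measure, use $\nu_i^1\to\nu_i^2$ (valid after discarding inconsistent $\psi_i$) to get $\mu_F(\nu_i^2)-\mu_F(\nu_i^1)\leq(\mu(\nu_i^2)-\mu(\nu_i^1))/\mu(F)<\epsilon$, and conclude by the converse direction noted after Proposition \ref{smoothlemma}. The paper instead argues directly from the definition of smoothness as uniqueness of the global extension: given any global extension $\nu$ of $\mu_F$, it glues $\nu$ (restricted to $F$, weighted by $\mu(F)$) with the unique global extension $\tilde\mu$ of $\mu$ on $F^c$ to manufacture a global extension $\nu'$ of $\mu$; uniqueness forces $\nu'=\tilde\mu$ and hence $\nu=\tilde\mu_F$. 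The paper's proof is shorter and gives the extra information that the unique extension of $\mu_F$ is precisely the localization of $\tilde\mu$ at (the preimage of) $F$, though it silently uses that any global extension of $\mu_F$ concentrates on that preimage. Your proof trades that Borel-measure bookkeeping at the global level for a purely formula-level computation over $N$, at the cost of invoking Proposition \ref{smoothlemma} together with its converse; both are legitimate, and your rescaling of the tolerance by $\mu(F)$ is exactly the right adjustment.
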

\begin{proof}
Assume that $\mu$ is smooth.
Let $\nu$ be a global extension of $\mu_F$  and let $\tilde \mu$ be the unique global extension of $\mu$. Then we define a global measure $\nu'$ by $\nu'(\phi(x))=\tilde \mu(\phi(x) \cap F^{c})+\nu(\phi(x)\cap F).\mu(F)$. This defines an global extension of $\mu$. By smoothness, $\nu'=\tilde \mu$, and it follows that $\nu=\tilde \mu_F$.
\end{proof}

\begin{lemme}[Characterization of smoothness]\label{smoothcar}
Let $\mu\in \mathcal M_x(M)$. Then $\mu$ is smooth if and only if $\mu(\partial^M\phi(x,b))=0$ for all $\phi(x,y)\in L(M)$ and all $b\in \bar M$.
\end{lemme}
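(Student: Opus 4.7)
My plan is to partition $S_x(M)$ according to how types decide $\phi(x,b)$ and read off smoothness from the behavior of $\mu$ on the three pieces. Fix $\phi(x,y)\in L(M)$ and $b\in \bar M$, and write
$$F_+ = \{p\in S_x(M): p\vdash \phi(x,b)\}, \quad F_-=\{p: p\vdash \neg\phi(x,b)\}, \quad F_0=\partial^M\phi(x,b).$$
These three sets partition $S_x(M)$. By compactness, $F_+$ is the union of the clopens $[\psi(x)]$ over all $\psi(x)\in L_x(M)$ with $\psi \vdash \phi(x,b)$, hence open; likewise $F_-$; so $F_0$ is closed, and all three are Borel.

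For the easy direction $(\Leftarrow)$, assume $\mu(\partial^M\phi(x,b))=0$ for every $\phi$ and $b$, and let $\mu^*$ be any global extension of $\mu$. Let $\pi \colon S_x(\bar M) \to S_x(M)$ be the restriction map; since $\mu^*$ agrees with $\mu$ on $L_x(M)$, the uniqueness of the regular Borel extension recalled in the preliminaries gives $\pi_*\mu^* = \mu$ on \emph{all} Borel subsets of $S_x(M)$. Then
$$\mu^*(\phi(x,b)) = \mu^*(\pi^{-1}F_+) + \mu^*(\pi^{-1}F_- \cap [\phi(x,b)]) + \mu^*(\pi^{-1}F_0 \cap [\phi(x,b)]).$$
The first summand equals $\mu(F_+)$ (since $\pi^{-1}F_+\subseteq [\phi(x,b)]$), the second is $0$ (since $\pi^{-1}F_-\subseteq [\neg\phi(x,b)]$), and the third is at most $\mu(F_0)=0$. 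Hence $\mu^*(\phi(x,b))=\mu(F_+)$, a value determined by $\mu$ alone, so any two global extensions of $\mu$ agree on every formula in $L_x(\bar M)$, proving smoothness.

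For $(\Rightarrow)$, I argue by contrapositive. Suppose $\mu(\partial^M\phi(x,b))=\alpha>0$, and build two finitely additive probability measures $\mu_1,\mu_2$ on the boolean algebra $\Omega := \langle L_x(M)\cup\{\phi(x,b)\}\rangle \subseteq L_x(Mb)$. For $\psi(x)\in L_x(M)$, set
$$\mu_1(\psi(x)\wedge \phi(x,b))=\mu([\psi]\cap F_+), \qquad \mu_2(\psi(x)\wedge \phi(x,b))=\mu([\psi]\cap (F_+\cup F_0)),$$
and $\mu_i(\psi(x)\wedge \neg\phi(x,b))=\mu([\psi])-\mu_i(\psi(x)\wedge \phi(x,b))$. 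A direct check shows each $\mu_i$ is finitely additive with values in $[0,1]$, has $\mu_i(\top)=1$, and agrees with $\mu$ on $L_x(M)$; but $\mu_2(\phi(x,b))-\mu_1(\phi(x,b))=\mu(F_0)=\alpha>0$. Applying Lemma \ref{boolpartial} extends each $\mu_i$ to a Keisler measure over $\bar M$, yielding two distinct global extensions of $\mu$, so $\mu$ is not smooth.

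The one subtle point to be careful about is the identification $\pi_*\mu^* = \mu$ on all Borel sets (not merely on clopens), on which the $(\Leftarrow)$ direction rests; this follows from the uniqueness of the regular Borel extension of a Keisler measure to its type space. Once the Borel partition $F_+\sqcup F_-\sqcup F_0$ is in hand, both directions are essentially bookkeeping, and I do not anticipate any genuine obstacle.
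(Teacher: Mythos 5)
Your proof is correct, and the two directions fare differently against the paper. The direction ``$\mu(\partial^M\phi(x,b))=0$ for all $\phi,b$ implies smooth'' is essentially the paper's argument: the paper writes the same inequality $\mu(F_+)\leq \nu(\phi(x,b))\leq \mu(F_+)+\mu(\partial^M\phi(x,b))$ for any extension $\nu$, which is exactly what your three-piece decomposition yields. (Your one flagged subtlety, that the pushforward of $\mu^*$ under restriction agrees with $\mu$ on all Borel sets, is genuinely needed in both versions; uniqueness of the regular extension only applies once you know the pushforward is itself regular, which holds because the restriction map is a closed continuous surjection of compact totally disconnected spaces -- worth one extra line.) The converse direction is where you genuinely diverge: the paper assumes $\mu$ smooth, invokes Proposition \ref{smoothlemma} (the approximation of $\phi(x,y)$ by definable rectangles $\nu_i^1\rightarrow\phi(x,b)\rightarrow\nu_i^2$ imported from \cite{NIP3}), and traps $\partial\phi(x,b)$ inside $\nu_i^2\setminus\nu_i^1$ of measure $<\epsilon$. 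You instead argue the contrapositive by hand, building two finitely additive measures on the algebra generated by $L_x(M)$ and $\phi(x,b)$ that assign $\phi(x,b)$ the values $\mu(F_+)$ and $\mu(F_+\cup F_0)$ respectively, and extending them globally via Lemma \ref{boolpartial}; the well-definedness checks you gesture at (that $\psi_1\wedge\phi(x,b)=\psi_2\wedge\phi(x,b)$ forces $[\psi_1]\cap F_+=[\psi_2]\cap F_+$, etc.) all go through. Your route is more self-contained and elementary, using only material proved in this paper and no NIP input for that direction; the paper's is shorter but leans on the nontrivial structure theorem for smooth measures. Both are valid, and your construction has the small added benefit of exhibiting explicitly the two extensions witnessing failure of smoothness.
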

\begin{proof}
Let $\phi(x,y)$ and $b$ be as in the statement of the lemma. Let $O \subseteq S_x(M)$ be the set of types $p$ such that $p\vdash \phi(x,b)$. And let $F=\partial^M\phi(x,b)$. Then for any extension $\nu$ of $\mu$, we have $\mu(O)\leq \nu(\phi(x,b)) \leq \mu(O)+\mu(F)$. Therefore if $\mu(F)=0$ for all such $\phi$ and $b$, then $\mu$ is smooth.

Conversely, assume that $\mu$ is smooth and let $\phi(x,y)$ and $b$ be as above. Let $\epsilon >0$ and take $\nu_i^1(x)$, $\nu_i^2(x)$, $\psi_i(y)$, $i=1,..,n$ be as in Proposition \ref{smoothlemma}. Let $i$ be such that $\psi_i(b)$ holds. Then $\partial\phi(x,b) \subseteq \nu_i^2(x) \setminus \nu_i^1(x)$. Therefore $\mu(\partial\phi(x,b))<\epsilon$. As this is true for all $\epsilon>0$, $\mu(\partial\phi(x,b))=0$.
\end{proof}

To illustrate this, assume $T$ is o-minimal, let $M \prec N$ be models of $T$ and let $\phi(x)\in L(N)$ be a formula, $x$ a single variable. By o-minimality, $\phi(x)$ is a finite union of (closed or open) intervals. Let $a_0,..,a_{n-1}$ denote those end points that lie in $N \setminus M$. Then $\partial^M\phi=\{\tp(a_k/M): k<n\}$. In particular, it is finite.

\begin{lemme}\label{pf}
Let $\mu_x$ be a global measure, smooth over $M$. Let $f$ be an $M$ definable function whose range is the sort of the variable $x$. Then $f_*(\mu)$ is smooth.
\end{lemme}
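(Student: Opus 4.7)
The plan is to apply the characterization of smoothness from Lemma \ref{smoothcar}: it suffices to show that $(f_*\mu)(\partial^M\psi(y,c))=0$ for every formula $\psi(y,z)\in L(M)$ and every $c\in \bar M$. Writing $y$ for the variable in the range sort of $f$, the push-forward satisfies $(f_*\mu)(B)=\mu(\hat f^{-1}(B))$ for Borel $B\subseteq S_y(M)$, where $\hat f\colon S_x(M)\to S_y(M)$ is the continuous map induced by $f$ on type spaces. So I only need to bound $\mu(\hat f^{-1}(\partial^M\psi(y,c)))$.

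The key pointwise lemma is the inclusion
\[
\hat f^{-1}(\partial^M\psi(y,c))\;\subseteq\;\partial^M\bigl(\psi(f(x),c)\bigr),
\]
where the right-hand side is the border of the formula $\psi(f(x),c)\in L(Mc)$ in the variable $x$, which makes sense because $f$ is $M$-definable. To see this, fix $p\in S_x(M)$ with $\hat f(p)\in \partial^M\psi(y,c)$, so there exist $a,a'\models \hat f(p)$ with $\models \psi(a,c)\wedge\neg\psi(a',c)$. Pick any $b_0\models p$; then $f(b_0)\models \hat f(p)\equiv_M a$, so by homogeneity of $\bar M$ there is an $M$-automorphism sending $f(b_0)$ to $a$, and applying it to $b_0$ produces $b\models p$ with $f(b)=a$. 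Similarly we obtain $b'\models p$ with $f(b')=a'$. Then $\psi(f(b),c)\wedge\neg\psi(f(b'),c)$ witnesses $p\in\partial^M\psi(f(x),c)$.

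Combining these,
\[
(f_*\mu)(\partial^M\psi(y,c))\;=\;\mu\bigl(\hat f^{-1}(\partial^M\psi(y,c))\bigr)\;\le\;\mu\bigl(\partial^M\psi(f(x),c)\bigr)\;=\;0,
\]
the last equality coming from Lemma \ref{smoothcar} applied to $\mu$ (which is smooth over $M$) with the $L(M)$-formula $\psi(f(x),z)$ and the parameter $c$. Invoking Lemma \ref{smoothcar} in the other direction, $f_*\mu$ is smooth over $M$, hence smooth.

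The main delicate point is the lifting argument in the second paragraph: one must be careful that different realizations of $p$ may map to the same realization of $\hat f(p)$, so one cannot simply lift $a$ through $f$ without invoking an automorphism. Once this is handled, both the inclusion and the application of Lemma \ref{smoothcar} are formal. Note also that we use in passing that Borel sets pull back to Borel sets under $\hat f$, and that $\partial^M\psi(y,c)$ is closed (hence Borel), so $\mu$ of its preimage is well-defined.
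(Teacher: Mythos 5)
Your proof is correct and follows essentially the same route as the paper: reduce to the border criterion of Lemma \ref{smoothcar} and compare $\partial^M\psi(y,c)$ with $\partial^M\psi(f(x),c)$. The only difference is that you carefully justify, via an $M$-automorphism, the inclusion $\hat f^{-1}(\partial^M\psi(y,c))\subseteq\partial^M\psi(f(x),c)$, which the paper simply asserts (as the equality $\lambda(F)=\mu(G)$) without proof.
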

\begin{proof}
Let $\mu_x$ be a global measure, smooth over $M$. Let $\lambda_y=f_*(\mu)$ and let $\phi(y)$ be an $M$-definable set. Let $F=\partial^{M} \phi(y)$. Define also $\psi(x)=\phi(f(x))$ and $G=\partial^M \psi(x)$. Then $\lambda(F)=\mu(G)=0$ as $\mu$ is smooth. By Lemma \ref{smoothcar}, $f_*(\mu)$ is smooth.
\end{proof}

\noindent
The following easy fact will be used implicitly in what follows.

\begin{lemme}\label{invsmooth}
Let $\mu$ be a global smooth measure. Assume that $\mu$ is $M$-invariant, then $\mu$ is smooth over $M$.
\end{lemme}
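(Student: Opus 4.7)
My plan is to apply the characterization of smoothness in Lemma \ref{smoothcar}: it suffices to show that $\mu(\partial^M\phi(x,b))=0$ for every $\phi(x,y)\in L(M)$ and every $b\in\bar M$. Since $\mu$ is smooth, I fix a small model $N$ over which $\mu$ is smooth and enlarge so that $M\subseteq N$; this is legitimate because smoothness is preserved when passing to a larger base (any extension of $\mu|_N$ is in particular an extension of $\mu|_{N_0}$ for the smaller original $N_0$). Lemma \ref{smoothcar} applied to $\mu$ over $N$ then gives $\mu(\partial^N\psi(x,c))=0$ for all $\psi(x,y)\in L(N)$ and $c\in\bar M$.

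Fix $\phi\in L(M)$ and $b\in\bar M$. Since $M$-invariance ensures $\mu(\partial^M\phi(x,b))$ depends only on $\tp(b/M)$, I may replace $b$ by any $b^*\equiv_M b$. I choose $b^*$ in $\bar M$ so that $\tp(b^*/N)$ is finitely satisfiable in $M$ --- that is, $b^*$ realizes a coheir of $\tp(b/M)$ over $N$ --- which exists by saturation of $\bar M$.

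The key claim is the fiberwise inclusion $r^{-1}(\partial^M\phi(x,b^*))\subseteq\partial^N\phi(x,b^*)$, where $r:S_x(N)\to S_x(M)$ is the restriction map. Granting it, since $r_*(\mu|_N)=\mu|_M$, we obtain
$$\mu(\partial^M\phi(x,b))=\mu|_N(r^{-1}(\partial^M\phi(x,b^*)))\le\mu(\partial^N\phi(x,b^*))=0.$$
To prove the claim, suppose $q\in S(N)$ with $r(q)=p\in\partial^M\phi(x,b^*)$ decides $\phi(x,b^*)$ (say $q\vdash\phi(x,b^*)$), and aim for a contradiction. By compactness there is $\xi(x)\in q\cap L(N)$ with $\xi(x)\to\phi(x,b^*)$. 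Setting $\chi(y):=\forall x(\xi(x)\to\phi(x,y))\in L(N)$, we have $\chi(b^*)$, so finite satisfiability of $\tp(b^*/N)$ in $M$ produces $b'\in M$ with $\chi(b')$, i.e.\ $\xi(x)\to\phi(x,b')$. Since $\phi(x,b')\in L(M)$ is decided by $p$, the condition $\xi\in q\supseteq p$ forces $p\vdash\phi(x,b')$; the splitting witnesses $a,a'\models p$ of $p\in\partial^M\phi(x,b^*)$ should then be combined with this information to derive a contradiction.

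The main obstacle is closing this final contradiction rigorously: linking the $L(M)$-formula $\phi(x,b')$ (for $b'\in M$) to $\phi(x,b^*)$ (for $b^*$ outside $M$) requires a careful use of $M$-invariance of $\mu$, possibly iterating the compactness-plus-finite-satisfiability argument with both branches $q\vdash\phi(x,b^*)$ and $q\vdash\neg\phi(x,b^*)$, so that the accumulated $L(M)$-constraints become incompatible with the existence of splitting realizations of $p$. The coheir choice of $b^*$ is precisely what permits this descent from $L(N)$-parameters back down to $L(M)$-parameters.
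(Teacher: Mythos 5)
Your reductions are sound as far as they go: smoothness does pass to larger bases, $\partial^M\phi(x,b)$ depends only on $\tp(b/M)$, so replacing $b$ by a realization $b^*$ of a coheir of $\tp(b/M)$ over $N$ is legitimate, and $\mu(\partial^M\phi(x,b))=\mu|_N(r^{-1}(\partial^M\phi(x,b^*)))$ is correct. But the key claim $r^{-1}(\partial^M\phi(x,b^*))\subseteq\partial^N\phi(x,b^*)$ is false, and the gap you acknowledge at the end cannot be closed because there is genuinely no contradiction to be found. Note first that the inclusion is a purely type-theoretic statement making no reference to $\mu$, so it cannot absorb the $M$-invariance hypothesis --- and without that hypothesis the lemma itself fails, so no such inclusion can carry the proof. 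Concretely, in DLO take $M=\mathbb Q$, let $a\in N$ fill the cut of $\pi$, let $\phi(x,y)$ be $x<y$, and let $b^*$ fill the same cut while lying below every element of $N$ in that cut; then $\tp(b^*/N)$ is finitely satisfiable in $M$, the cut $p=\tp(a/M)$ lies in $\partial^M\phi(x,b^*)$, but $q=\tp(a/N)$ is realized in $N$, has a unique realization, and hence cannot lie in $\partial^N\phi(x,b^*)$. Your own computation shows why the contradiction never materializes: from $q\vdash\xi(x)$ and $\xi(x)\rightarrow\phi(x,b')$ you deduce $p\vdash\phi(x,b')$ for some $b'\in M$, but $b'$ is realized in $M$, is certainly not $\equiv_M b^*$, and so this tells you nothing about the behaviour of realizations of $p$ with respect to $\phi(x,b^*)$.

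The paper's proof takes a different route, and it is there that $M$-invariance does its work. Apply Proposition \ref{smoothlemma} over the base of smoothness to obtain $\psi_i(y,d)$, $\nu_i^1(x,d)$, $\nu_i^2(x,d)$ with parameter $d\in\bar M$. The first two conditions of \ref{smoothlemma} are definable (clopen) conditions on $d$; the third, $\mu(\nu_i^2(x,d))-\mu(\nu_i^1(x,d))<\epsilon$, is an open condition on $\tp(d/M)$ because $\mu$ is definable over $M$ (smoothness gives definability over the base of smoothness, and $M$-invariance pushes the definition down to $M$). The set of suitable parameters is therefore a nonempty open subset of $S(M)$, hence meets the realized types, yielding $d'\in M$; the family with parameter $d'$ witnesses smoothness of $\mu|_M$ by the converse remark following \ref{smoothlemma}. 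If you want to keep your border-based framing, the step you are missing is to produce, for each $\epsilon>0$, formulas $\theta^1,\theta^2\in L(M)$ with $\theta^1\rightarrow\phi(x,b)\rightarrow\theta^2$ and $\mu(\theta^2)-\mu(\theta^1)<\epsilon$, since then $\partial^M\phi(x,b)\subseteq[\theta^2\wedge\neg\theta^1]$; but getting such $\theta^j$ over $M$ rather than over $N$ is exactly the content of the open-set argument above, not of finite satisfiability of $\tp(b^*/N)$ alone.
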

\begin{proof}
Proposition \ref{smoothlemma} gives us formulas $\psi_i(y,d)$, $\nu_i^1(x,d)$ and $\nu_i^2(x,d)$ with $d\in \bar M$ satisfying three properties as stated. It is enough to show that we can find $d'\in M$ such that the formulas $\psi_i(y,d')$, $\nu_i^1(x,d')$, $\nu_i^2(x,d')$ satisfy the same properties. Now the condition imposed on $d'$ by the first 2 properties is clopen. As $\mu$ is definable over $M$, the condition imposed by the third point is open. So we are looking for $d'$ in some open set of $S(M)$. As we know that this set is non-empty, it must intersect the set of realized types, and we find the required $d'$.
\end{proof}

\begin{lemme}\label{im}
Let $S_1,S_2$ be two sorts and $f:S_1 \rightarrow S_2$ a surjective definable map. Assume that all generically stable measures on $S_1$ are smooth, then it is also the case for $S_2$.
\end{lemme}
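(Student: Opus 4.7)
The plan is to lift the given global generically stable measure $\lambda_y$ on $S_2$ through $f$ to a generically stable measure on $S_1$ whose pushforward is $\lambda$, and then conclude from the hypothesis together with Lemma \ref{pf}.

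First, I would fix a small model $M$ containing the parameters defining $f$, over which $\lambda$ is finitely satisfiable; this is available by Theorem \ref{genstable}(1). Since $f \colon S_1 \to S_2$ is a definable surjection, elementarity gives that $f$ restricts to a surjection on $M$-points, so every $a \in S_2(M)$ admits a preimage $c_a \in S_1(M)$. Next I would use Proposition \ref{VCappr}, together with the fact that a measure finitely satisfiable in $M$ is supported on types finitely satisfiable in $M$, to write $\lambda$ as a weak-$*$ limit of finite averages of Dirac measures at types $\tp(a/\bar M)$ with $a \in M$. The parallel averages of Dirac measures at $\tp(c_a/\bar M)$ are then global measures on $S_1$ that are finitely satisfiable in $M$ and whose pushforwards converge to $\lambda$. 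A weak-$*$ cluster point $\mu_0$ of this family is a global measure on $S_1$ which is finitely satisfiable in $M$ (hence $M$-invariant) and satisfies $f_*(\mu_0)=\lambda$.

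Having $\mu_0$, I would apply the symmetrization construction of Section 3 to obtain a global generically stable measure $\mu_\Sigma$ on $S_1$. The pushforward $f_*(\mu_\Sigma)$ is a symmetrization of $f_*(\mu_0)=\lambda$: this is the content of part 2 of the earlier proposition on symmetrizations (the same-sort hypothesis there is only needed for part 3; the proof of part 2 is a direct manipulation of the integral formula defining $\mu_\Sigma$ and goes through for any definable $f$, with $f_*(\nu)$ serving as a smooth extension thanks to Lemma \ref{pf}). Since $\lambda$ is itself generically stable, part 1 of that same proposition tells us that every symmetrization of $\lambda$ equals $\lambda$, so $f_*(\mu_\Sigma)=\lambda$. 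The hypothesis of the present lemma gives that $\mu_\Sigma$ is smooth; enlarging the base if necessary so that $\mu_\Sigma$ is smooth over a model containing the parameters of $f$, Lemma \ref{pf} yields that $\lambda=f_*(\mu_\Sigma)$ is smooth, as wanted.

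The main obstacle is producing the lift $\mu_0$ with a workable invariance property. Finite satisfiability in $M$ is exactly the right notion because it is preserved both by averages of Dirac measures at realized types and by weak-$*$ limits, which is precisely what the VC approximation from Proposition \ref{VCappr} delivers; attempting instead to impose full $M$-invariance through a fixed-point argument on the space of lifts would be more delicate. Once $\mu_0$ is constructed, identifying $f_*(\mu_\Sigma)$ as a symmetrization of $\lambda$ and invoking Lemma \ref{pf} is routine.
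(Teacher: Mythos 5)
Your proposal is correct and follows the same overall strategy as the paper: lift $\lambda$ to an $M$-invariant global measure on $S_1$ pushing forward to $\lambda$, symmetrize it, observe that the pushforward of the symmetrization is a symmetrization of $\lambda$ over $M$ and hence (by generic stability of $\lambda$ and part 1 of the proposition on symmetrizations) equals $\lambda$, and conclude by the hypothesis on $S_1$ together with Lemma \ref{pf}. The only point where you diverge is the construction of the lift: the paper obtains it from Lemma \ref{boolpartial}, assigning to pullbacks $f^{-1}(\psi)$ the value $\lambda(\psi)$ and value $0$ to all formulas forking over $M$, so that invariance of the lift comes from non-forking; you instead take a weak-$*$ cluster point of lifted finite averages of realized types of points of $M$, so that invariance comes from finite satisfiability (using that finite satisfiability in $M$ is preserved under pointwise limits and that $f_*$ is continuous for that topology). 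Both constructions are fine; yours is a bit more hands-on and avoids invoking the fact that a measure vanishing on forking formulas is $M$-invariant, at the modest cost of the approximation step (which only needs pointwise approximation of support types by $M$-points, not the uniform statement of Proposition \ref{VCappr}), while the paper's is shorter given Lemma \ref{boolpartial}. Your remarks that part 2 of the symmetrization proposition goes through for a map between different sorts, and that one should pass to a small model containing the parameters of $f$ over which $\mu_\Sigma$ is smooth before applying Lemma \ref{pf}, are exactly the points the paper uses implicitly.
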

\begin{proof}
Let $\mu_x$ a generically stable measure concentrating on $S_2$ (i.e., $x$ is a variable of sort $S_2$). Let $M$ be a small model such that $\mu$ is $M$-invariant. We can find a measure $\eta_y$ on $\bar M$ which is $M$-invariant and such that $f_*(\eta)=\mu$ (this can be proven using \ref{boolpartial}. Take $\Omega_1$ to be definable sets of the form $f^{-1}(\psi)$, $\psi$ a definable set of $S_2$. Let $\Omega_2$ be composed of formulas $\phi(x)\in L(\bar M)$ that fork over $M$. Define $\mu_0$ on $\Omega=\llg \Omega_1,\Omega_2 \rrg$ by $\mu_0=f^{-1}(\mu)$ on $\Omega_1$ and $\mu_0=0$ on $\Omega_2$. By the Lemma extend $\mu_0$ to a global measure). Consider a symmetrization $\eta_{\Sigma}$ of $\eta$ over $M$. Then $f_*(\eta_{\Sigma})$ is a symmetrization of $\mu$ over $M$, and therefore is equal to $\mu$.

By hypothesis, $\eta_{\Sigma}$ is smooth. So $\mu=f_*(\eta_{\Sigma})$ is also smooth.
\end{proof}

We will use this ad-hoc criterion for smoothness.

\begin{prop}\label{transitive}
Assume $T$ has definable Skolem functions. Let $\mathcal S$ be a set of imaginary sorts containing the main sort $\bar M$. Assume that for any model $N$, any formula $\phi(\bar x,y)\in L(N)$ ($y$ a single variable from the main sort) is a boolean combination of formulas of the form $R(f(\bar x),y)$ where $R$ is a $\emptyset$-definable relation and $f$ is an $N$-definable function taking values in a sort from $\mathcal S$. Assume that for each $S\in \mathcal S$, all generically stable measures over $S$ are smooth.

Then any generically stable measure is smooth.
\end{prop}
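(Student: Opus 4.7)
The plan is to apply the border characterization of smoothness (Lemma \ref{smoothcar}) to a global generically stable measure $\mu_{\bar x}$. By Lemma \ref{invsmooth}, it suffices to show that $\mu|_N$ is smooth for some small $N$, i.e., $\mu(\partial^N\phi(\bar x,\bar b))=0$ for every formula $\phi(\bar x,\bar y)\in L(N)$ and every $\bar b\in\bar M$. Fix such $\phi$ and $\bar b$; using definable Skolem functions on $T$, every imaginary is interdefinable with a tuple from the main sort, so after rewriting $\phi$ we may assume $\bar b=(b_1,\ldots,b_k)$ lies in the main sort. After enlarging $N$ if necessary to contain the parameters of $\phi$ and the elements $b_1,\ldots,b_{k-1}$ (keeping $\mu$ generically stable over $N$), I apply the decomposition hypothesis with the last coordinate $y_k$ singled out as the single main-sort variable:
$$\phi(\bar x,y_1,\ldots,y_k)=\Phi\bigl(R_i(f_i(\bar x,y_1,\ldots,y_{k-1}),y_k)\bigr)_{i=1}^m,$$
with each $R_i$ $\emptyset$-definable and each $f_i$ an $N$-definable function into some $S_i\in\mathcal S$.

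Setting $g_i(\bar x)=f_i(\bar x,b_1,\ldots,b_{k-1})$, which is then $N$-definable into $S_i$, the push-forward $\nu_i=(g_i)_*(\mu)$ is a generically stable measure on $S_i$ (preservation under definable push-forward, noted after Theorem \ref{genstable}), and hence smooth by the hypothesis on $\mathcal S$. A direct verification using homogeneity of $\bar M$ over $N$ establishes
$$\partial^N_{\bar x}R_i(g_i(\bar x),b_k)=g_{i,*}^{-1}\bigl(\partial^N_z R_i(z,b_k)\bigr),$$
where the nontrivial inclusion lifts witnesses $c,c'\models g_{i,*}(p)$ back to realizations of $p$ via automorphisms of $\bar M$ fixing $N$. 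Lemma \ref{smoothcar} applied to $\nu_i$ then gives $\nu_i(\partial^N R_i(z,b_k))=0$, which pulls back through $g_i$ to $\mu(\partial^N R_i(g_i(\bar x),b_k))=0$. Subadditivity over the boolean combination forces $\mu(\partial^N\phi(\bar x,\bar b))=0$, witnessing smoothness of $\mu|_N$.

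The main obstacle I anticipate is the bookkeeping around the base: the function $g_i$ becomes $N$-definable only after $N$ is enlarged to absorb $b_1,\ldots,b_{k-1}$, so this enlargement has to be organised so that a single small $N$ (or a coherent choice of bases, together with Lemma \ref{invsmooth} to descend) witnesses the smoothness of $\mu$ uniformly in $\phi$ and $\bar b$. Once this is set up, Lemma \ref{invsmooth} transfers smoothness back to the minimal invariance base of $\mu$. The remaining small verifications, namely that generic stability is preserved under each enlargement and under the push-forward by $g_i$, are routine consequences of preservation results already in the paper.
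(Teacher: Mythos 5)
Your local computation is fine: for a single external parameter $b$ and $\phi(\bar x,y)\in L(N)$, the decomposition into formulas $R(f(\bar x),y)$, the identity $\partial^N_{\bar x}R(f(\bar x),b)=f_*^{-1}\bigl(\partial^N_z R(z,b)\bigr)$, smoothness of the push-forwards, and subadditivity of borders over a boolean combination do give $\mu(\partial^N\phi(\bar x,b))=0$. The genuine gap is exactly the point you defer as ``bookkeeping'': to handle a tuple $\bar b=(b_1,\dots,b_k)$ you enlarge $N$ to a model $N'$ containing $b_1,\dots,b_{k-1}$, and what you then prove is $\mu(\partial^{N'}\phi(\bar x,\bar b))=0$ with $N'$ depending on $\bar b$. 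That statement is far weaker than smoothness over a fixed small model: borders only shrink as the base grows (indeed, if all of $\bar b$ were absorbed the border would be empty), and since $\bar b$ ranges over the monster there is no single small model containing $b_1,\dots,b_{k-1}$ for all $\bar b$. Lemma \ref{invsmooth} cannot repair this, because it only transfers smoothness from one base to a smaller invariance base once you already know $\mu$ is smooth over \emph{some} small model --- which is precisely what is at stake. So as written the argument proves, for each global formula, unique determination over a base tailored to that formula, which every measure satisfies vacuously in the limit; it does not prove smoothness.

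The paper closes this gap by a different mechanism, which your proposal is missing: it only ever determines the measure of formulas with \emph{one} new main-sort parameter $c$ over the current base model $N$ (via $\nu(\phi(\bar x,c))=f_*(\nu)(R(z,c))$ and smoothness of $f_*(\mu)$ over $N$), and then uses definable Skolem functions to turn $N\cup\{c\}$ into a model $N(c)$ over which the unique extension of $\mu|_N$ is forced; since $N(c)$ is again a model, the decomposition hypothesis and the argument re-apply there, and a (transfinite) iteration through the parameters of an arbitrary global formula shows that any two global extensions of $\mu|_N$ agree, i.e. $\mu$ is smooth over the fixed $N$. This iteration is the role of the Skolem-function hypothesis; your use of it (``every imaginary is interdefinable with a tuple from the main sort'') is both incorrect as stated --- definable Skolem functions do not give elimination of imaginaries --- and beside the point, since imaginary parameters can always be replaced by real ones in $T^{eq}$. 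Your single-parameter case is essentially the paper's key step in border form; what remains is to replace the $\bar b$-dependent enlargement of the base by the Skolem-closure-and-iterate argument.
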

\begin{proof}
Let $M\prec N$ be two models of $T$. Assume that for all $n$, all $n$ types over $M$ are realized in $N$. Let $\mu \in \mathcal M(N)$ be an $M$-invariant generically stable measure in $k$ variables. It is enough to show that any formula of the form $\phi(\bar x,c)$, $c\in \bar M$ a 1-tuple and $\phi(\bar x,y)\in L(N)$, has the same measure in any extension of $\mu$. (Because then $N(c)$ is a model of $T$ over which $\mu$ has a unique extension, and we can replace $N$ by it and iterate.) By hypothesis, we may assume that $\phi(\bar x,y)=R(f(\bar x),y)$ for some $R$ and $f$ as above.

If $\nu$ is a global extension of $\mu$, then $\nu(\phi(\bar x,c))=f_*(\nu)(R(z,c))$. Now $f_*(\nu)$ is generically stable and $N$-invariant. By hypothesis, it is smooth over $N$. Therefore $\nu(\phi(\bar x,c))$ is determined.
\end{proof}

\begin{cor}\label{omin}
If $T$ is o-minimal, then any generically stable measure is smooth.
\end{cor}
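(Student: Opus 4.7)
The plan is to invoke Proposition \ref{transitive} with $\mathcal S$ equal to the singleton $\{\bar M\}$ consisting of the main sort. Two of its three hypotheses are classical consequences of o-minimality: $T$ has definable Skolem functions, and by o-minimal cell decomposition any formula $\phi(\bar x, y) \in L(N)$ with $y$ a single main-sort variable is a boolean combination of formulas $y = f(\bar x)$ and $y < f(\bar x)$ for $N$-definable functions $f$ into the main sort. The real work is to verify the remaining hypothesis: every generically stable measure in a single main-sort variable is smooth.

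For this, let $\mu \in \mathcal M_x(N)$ be generically stable, hence definable and finitely satisfiable over some $M \prec N$. By Lemma \ref{smoothcar} it suffices to show $\mu(\partial^N \phi(x,b)) = 0$ for every $\phi(x,y) \in L(N)$ and $b \in \bar M$. Since any formula in a single main-sort variable defines a finite union of intervals and points, the border $\partial^N \phi(x,b)$ is a finite set of non-$N$-realized (``cut'') types in $S_x(N)$, and it is enough to show that $\mu$ has no atom at any cut $p \in S_x(N)$.

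Suppose toward contradiction $\mu(\{p\}) = \alpha > 0$ for some cut $p \in S_x(N)$. Since each $\mathrm{Aut}(N/M)$-conjugate of $p$ carries the same $\mu$-measure by $M$-invariance, the orbit of $p$ must be finite, and in the $|M|^+$-saturated model $N$ this forces $p$ to be $M$-invariant. I then split according to whether $p|_M \in S_x(M)$ is realized in $M$ or is itself a cut over $M$. In the first sub-case, $p|_M = \tp(c/M)$ for some $c \in M$ and $p$ is the cut just above or just below $c$ in $N$; by saturation of $N$ some $b \in N$ realizes the $M$-cut immediately adjacent to $c$ on the relevant side, and the formula $c < x < b$ (or its mirror) then lies in $p$, has $\mu$-measure $\geq \alpha > 0$, yet is satisfied by no $M$-point, contradicting finite satisfiability of $\mu$ in $M$. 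In the second sub-case the only $M$-invariant extensions of $p|_M$ to $N$ are the leftmost and rightmost cuts $p|_M^L$, $p|_M^R$; for either of these, the function $b \mapsto \delta_p(x<b)$ on $S_y(M)$ is the indicator of a closed-but-non-open set (e.g.\ $\{q : q \geq p|_M\}$), so $b \mapsto \mu(x<b)$ inherits a genuine jump of size $\alpha$ at $b = p|_M$, contradicting definability of $\mu$ over $M$.

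The main technical difficulty will be the second sub-case: I must check that the jump produced by the atom at $p$ cannot be cancelled by the remainder of $\mu$. This reduces to verifying that any atomless part of a Keisler measure on $S_x(M)$ gives a function $b \mapsto \mu(x<b)$ continuous at every cut of $S_y(M)$, while atoms of $\mu$ sitting at cuts distinct from $p|_M$ produce only their own localized jumps and so cannot compensate the one at $p|_M$.
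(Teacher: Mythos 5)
Your overall plan is the paper's: invoke Proposition \ref{transitive} with $\mathcal S=\{\bar M\}$ and reduce, via Lemma \ref{smoothcar} and finiteness of borders, to showing that a one-variable generically stable measure has no atom at a non-realized type. The gap is in your case analysis ruling out such an atom $p$. First, the hypothesis of your first sub-case is vacuous as literally stated: if $p|_M=\tp(c/M)$ with $c\in M$, then $x=c\in p$ and $p$ is realized; what you mean is that $p|_M$ is the definable cut $c^{+}|_M$ or $c^{-}|_M$ (or $\pm\infty$). More seriously, in that situation $M$-invariance does \emph{not} force $p$ to be the cut immediately above or below $c$ in $N$: the type $c^{+}|_M$ has two $M$-invariant global extensions, the heir $c^{+}$ (just above $c$ in $N$) and the coheir cut lying just below $M_{>c}$, i.e.\ above every $b\in N$ with $c<b<M_{>c}$. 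For the coheir extension the formula $c<x<b$ does \emph{not} belong to $p$, so your finite-satisfiability argument says nothing about it, and it is excluded from your second sub-case as you delimit it. Symmetrically, the blanket claim in your second sub-case that $b\mapsto\delta_p(x<b)$ is the indicator of a closed-but-not-open subset of $S_y(M)$ fails precisely for the heir-side extensions ($c^{+}$, $c^{-}$, $\pm\infty$ over $N$): there the relevant set is clopen (e.g.\ $\{q:q\vdash y>c\}$) or empty, there is no jump, and indeed these are definable types, so only finite satisfiability can eliminate them. As written, then, atoms at the coheir extensions of definable cuts (and the extensions of the cuts at $\pm\infty$) are never ruled out. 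A smaller point: the step ``finite orbit implies $M$-invariant'' needs a word (an order-preserving permutation of finitely many cuts is the identity, and one should work in the monster, or assume strong homogeneity, to pass from equality of types over $M$ to conjugation).

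The argument is repairable along your lines, but the two tools must be matched correctly to the two invariant extensions of the cut $p|_M$: if $p$ contains an interval with endpoints in $N$ but no point of $M$ (the heir-side case $c^{\pm}$ or $\pm\infty$ over $N$), finite satisfiability fails on that interval, which has measure at least $\alpha$; otherwise $p$ is finitely satisfiable in $M$, and then $p|_M$ is the limit in $S_y(M)$ of realized types $\tp(m/M)$ from the appropriate side, with $|\mu(x<m)-\mu(x<b)|\geq\alpha$ for $b\models p|_M$ in $N$, because the region between $b$ and $m$ contains the atom; since definability of $\mu$ makes $q\mapsto\mu(x<b)$ continuous on $S_y(M)$ (apply the definition to $\phi$ and $\neg\phi$), this is a contradiction. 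Note that positivity and monotonicity of the measure make the ``cancellation'' issue you flag at the end a non-issue; the genuine difficulty was the one above. For comparison, the paper's own proof avoids this case-by-case work entirely: it observes that a type of positive measure in the (finite) border would be a non-realized generically stable type, and o-minimal theories have no non-realized generically stable $1$-types, so by Lemma \ref{smoothcar} the measure is smooth.
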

\begin{proof}
We will check the hypothesis of the previous proposition for $\mathcal S=\{\bar M\}$ (the main sort). Let $T$ be o-minimal. Every formula $\phi(\bar x,y)$ is a boolean combination of formulas of the form $y < f(\bar x)$ and $y=f(\bar x)$ where $f$ is a definable function. So the first part of the hypothesis is satisfied. Next, consider $\mu_x$ a global generically stable measure in dimension 1. Let $\phi(x)$ be a formula, $x$ a single variable, with parameters in some extension $\bar N \succ \bar M$. As explained after the proof of \ref{smoothcar}, $\partial^{\bar M}\phi$ is a finite set of non-realized types. Finiteness easily implies that all the types in $\partial^{\bar M}\phi$ are generically stable. As there are no non-realized generically stable types in $T$, $\mu(\partial^{\bar M}\phi)=0$. By \ref{smoothcar}, $\mu$ is smooth. Proposition \ref{transitive} therefore applies.
\end{proof}

\begin{cor}\label{padic}
Let $T=Th(\mathbb Q_p)$, for some $p$, then any generically stable measure is smooth.
\end{cor}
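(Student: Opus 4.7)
My approach mirrors the proof of Corollary \ref{omin}: I would apply Proposition \ref{transitive}. The theory $T = Th(\mathbb Q_p)$ has definable Skolem functions by a classical result. For $\mathcal S$ I would take the main sort $\bar M$, the value group sort $\Gamma$, and the finite quotient sorts $\mathbb Q_p^\times/(\mathbb Q_p^\times)^n$ for $n\geq 2$. Denef--Macintyre $p$-adic cell decomposition supplies the required syntactic condition: every formula $\phi(\bar x,y) \in L(N)$ with $y$ a single variable from the main sort is a boolean combination of formulas of the form $y = f(\bar x)$, $v(y - f(\bar x)) \star g(\bar x)$ with $\star \in \{<,=,>\}$, and $y - f(\bar x) \in c \cdot (\mathbb Q_p^\times)^n$; each of these has the shape $R(h(\bar x), y)$ with $R$ a $\emptyset$-definable relation and $h$ an $N$-definable map into a sort of $\mathcal S$.

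The remaining work is to show, for every $S\in\mathcal S$, that all generically stable measures on $S$ are smooth. The finite sorts are trivial. For the value group $\Gamma$, a model of Presburger arithmetic, the argument copies the o-minimal case: every 1-definable set is a finite union of arithmetic progressions on intervals, so $\partial^M\phi$ is a finite set of non-realized types, and the linear order prevents any of them from being generically stable; Lemma \ref{smoothcar} then concludes. For the main sort, cell decomposition in one variable presents $\partial^M\phi(x,b)$ as a finite union of types over $M$, each of which is either a ball-generic type or a point-limit type coming from a decreasing chain of balls with empty $M$-intersection. I would argue that none admits a generically stable extension: point-limit types carry an intrinsic direction in the tree of balls that prevents total indiscernibility of any Morley sequence, while for ball-generic types an infinite totally indiscernible sequence is ruled out by the finiteness of the residue field of $\mathbb Q_p$. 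Once this is done, $\mu(\partial^M\phi)=0$ for every generically stable $\mu$, so $\mu$ is smooth by Lemma \ref{smoothcar}, and Proposition \ref{transitive} delivers the corollary.

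The hardest step is the last one. The o-minimal argument rests on the linear order making every non-realized 1-type directional; the $p$-adic version must instead combine the tree structure of balls with finiteness of the residue field, and the bookkeeping for ruling out infinite totally indiscernible Morley sequences is noticeably more delicate than in Corollary \ref{omin}.
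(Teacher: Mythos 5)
Your high-level strategy is the paper's: apply Proposition \ref{transitive}, verify its syntactic hypothesis via $p$-adic cell decomposition, and check one-variable smoothness on the auxiliary sorts. But there is a genuine gap in your choice of $\mathcal S$. The cell-decomposition atoms $v(y-f(\bar x))\star g(\bar x)$ involve \emph{two} definable functions, $f$ into the main sort and $g$ into $\Gamma$, whereas Proposition \ref{transitive} requires each atom to have the form $R(h(\bar x),y)$ for a \emph{single} function $h$ into a \emph{single} sort of $\mathcal S$. With $\mathcal S=\{\bar M,\Gamma,\text{finite quotients}\}$ this hypothesis simply is not met; packaging $(f,g)$ as a map into the product sort $\bar M\times\Gamma$ would force you to prove smoothness of generically stable measures in \emph{two} variables on that sort, which defeats the purpose of the reduction. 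The correct fix --- and what the paper does --- is to take $h(\bar x)$ to be the canonical parameter of the ball $\{y: v(y-f(\bar x))\star g(\bar x)\}$ (and similarly for the coset conditions $y-f(\bar x)\in c(\mathbb Q_p^\times)^n$, which also hide a pair $(f(\bar x),c)$), so that $\mathcal S$ must contain the imaginary sorts $\mathfrak B$ and $\mathfrak B_{k,n}$ of (generalized) balls.

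This omission is not cosmetic: proving that generically stable measures on the ball sort $\mathfrak B$ are smooth is the substantive core of the paper's proof, and nothing in your proposal addresses it. The paper pushes such a measure $\mu$ forward under $\val:\mathfrak B\to\Gamma$, uses that $\val_*(\mu)$ is generically stable hence smooth, and splits into cases: if $\val_*(\mu)$ is a realized type $x=\alpha$, then $\mu$ lives on the balls of radius $\alpha$, which are the image of a definable surjection from $\bar M$, so Lemma \ref{im} applies; if $\val_*(\mu)$ is atomless, one checks directly that borders are null. By contrast, the step you flag as hardest --- ruling out generically stable border types in the main sort and in $\Gamma$ --- is exactly what the paper leaves to the reader as a finite-border check in the style of the o-minimal case, and your sketch of it (linear order on $\Gamma$, tree of balls on $\bar M$) is in the right spirit even if informal. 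So the weight of the proof sits where your proposal is silent, not where you expect it to.
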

\begin{proof}
This case is similar to the o-minimal one. Let $\Gamma$ denote the value group. For $1 \geq k,n <\omega$, let $\mathfrak B_{k,n}$ be the set of canonical parameters of definable sets of the form $\{x: \val(x-a) \equiv k~[n]\}$, $a\in \bar M$ and $\mathfrak B$ be the set of canonical parameters of balls $\{x: \val(x-a) = \alpha\}$ for $a\in \bar M$, $\alpha \in \Gamma$.

We will check that Proposition \ref{transitive} applies with $\mathcal S=\bigcup_{k,n} \mathfrak B_{k,n}\cup \{\bar M, \mathfrak B\}$.

We leave it to the reader to check that all generically stable measures in one variable from $\bar M$ or from $\Gamma$ are smooth (this can be done as in the o-minimal case: check that the border of a definable set is finite).

Next, let $\mu_x$ be a generically stable measure on $\mathfrak B$. Let $\val$ denote the natural map from $\mathfrak B$ to $\Gamma$. Then, $\val_*(\mu)$ is generically stable and therefore is smooth. We may assume that $\val_*(\mu)$ is either a realized type or an atomless measure. Assume $\val_*(\mu) = ``x=\alpha"$ for some $\alpha \in \Gamma$. Then $\mu$ is a measure concentrating on $\mathfrak B_\alpha$: the sort of balls of radius $\alpha$. There is a surjective map $\pi: \bar M \rightarrow \mathfrak B_\alpha$, so by Lemma \ref{im}, $\mu$ is smooth. Now if $\val_*(\mu)$ is atomless, one can check by inspection that $\mu(\partial \phi(x))=0$ for every definable set $\phi(x)$.

Finally, for any $k,n <\omega$, there is a surjective map from $\mathfrak B$ to $\mathfrak B_{k,n}$, so all generically stable measures there are also smooth.

Let $\phi(x)\in L(A)$ be a definable set in dimension 1. Then $\phi(x)$ can be written as a finite boolean combination of formulas of the form $x \in b$ with $b$ in some $\mathfrak B_{k,n}$, $k,n <\omega$ or in $\mathfrak B$. We can choose the decomposition such that $b$ is $A$-definable. Therefore Proposition \ref{transitive} applies.
\end{proof}

Theories in which all generically stable measures are smooth will be studied in a subsequent work \cite{trans}, where equivalent characterizations will be given along with some properties. In particular, it will be shown that in a dp-minimal theory with no generically stable type in the main sort, all generically stable measures are smooth, generalizing the two corollaries above.

\section{$\sigma$-additive measures}

We prove in this section the main result of this paper: $\sigma$-additive measures are generically stable.
\\

Recall that if $M\models T$, an externally definable subset of $M$ is a subset of the form $\phi(M)$ where $\phi(x) \in L(\bar M)$. Assume that the model $M$ is equipped with a $\sigma$-algebra $\mathcal A$ such that externally definable sets are measurable. Then if $\mu_0$ is a ($\sigma$-additive) probability measure on $(M,\mathcal A)$, $\mu_0$ induces a global Keisler measure $\mu$. Namely $\mu(\phi(x))=\mu_0(\phi(M))$, for $\phi(x) \in L(\bar M)$.

\begin{thm}\label{sigma}
Let $T$ be $NIP$, $M\models T$ equipped with a $\sigma$-algebra $\mathcal A$. Assume that any externally definable subset of $M^2$ is measurable for the product algebra $\mathcal A^{\otimes 2}$. Let $\mu_0$ be a probability measure on $(M,\mathcal A)$. Then $\mu_0$ induces a global Keisler measure $\mu$ and $\mu$ is generically stable.
\end{thm}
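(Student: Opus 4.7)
The plan is to construct, from the $\sigma$-additive data, a symmetric global Keisler measure $\eta^{(n)}$ in $n$ variables for each $n$, and then use Proposition \ref{fsunique} to identify it with the Morley product $\mu^{(n)}$. Symmetry of $\mu^{(n)}$ for all $n$ then gives total indiscernibility of $\mu^{(\omega)}|_M$, and Theorem \ref{genstable}(2) delivers generic stability.

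First I would record the easy preliminaries. Since $\mu(\phi)>0$ forces $\phi(M)\neq\emptyset$, the measure $\mu$ is finitely satisfiable in $M$, hence $M$-invariant. Also, the hypothesis on $M^2$ extends by induction (grouping coordinates into tuples) to give that every externally definable subset of $M^n$ lies in $\mathcal A^{\otimes n}$, so one can legitimately define a global Keisler measure
$$\eta^{(n)}(\phi(x_1,\ldots,x_n)) := \mu_0^{\otimes n}(\phi(M^n)), \quad \phi \in L(\bar M).$$
Classical Fubini makes $\eta^{(n)}$ invariant under permutations of its $n$ variables, and the one-variable marginal is $\mu$.

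Next I would show by induction on $n$ that $\eta^{(n)}=\mu^{(n)}$, where $\mu^{(n)}$ is the $n$-th Morley power of $\mu$. The case $n=1$ is trivial. For the step, assuming $\eta^{(n-1)}=\mu^{(n-1)}$, observe that $\eta^{(n)}$ is an amalgam of $\mu_{x_n}$ and $\mu^{(n-1)}_{x_1\ldots x_{n-1}}$, so by Proposition \ref{fsunique} it suffices to verify that $\eta^{(n)}$ is an f.s.\,extension of $\mu_{x_n}$ in $M$:
$$\eta^{(n)}(\theta \wedge \phi(x_n)) \leq \mu(\phi(x_n))\cdot \sup_{a\in \phi(M)} \mu^{(n-1)}(\theta(x_1,\ldots,x_{n-1},a)),$$
for all $\theta(x_1,\ldots,x_n),\phi(x_n) \in L(\bar M)$. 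Applying classical Fubini to the $\sigma$-additive product measure, the left-hand side equals
$$\int_M \mathbf{1}_{\phi(M)}(a)\,\mu_0^{\otimes(n-1)}(\theta(M^{n-1},a))\,d\mu_0(a),$$
and bounding the integrand by the supremum on the support $\phi(M)$ yields the required inequality. Proposition \ref{fsunique} then gives $\eta^{(n)} = \mu_{x_n}\otimes \mu^{(n-1)} = \mu^{(n)}$.

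Once this identification is established, $\mu^{(n)}$ inherits the symmetry of $\eta^{(n)}$ for every $n$, hence $\mu^{(\omega)}|_M$ is totally indiscernible and Theorem \ref{genstable}(2) finishes the argument. The main technical obstacle is the inductive step: one must first extend the two-variable measurability hypothesis to arbitrary finite products, and then verify that the function $a\mapsto \mu_0^{\otimes(n-1)}(\theta(M^{n-1},a))$ is genuinely $\mathcal A$-measurable so that Fubini legitimately applies. Both points rely on careful handling of externally definable slices, which is exactly why the hypothesis of the theorem is stated for $M^2$ (rather than for single copies of $M$); with these measurability issues in hand, everything else falls out of the already-established uniqueness of f.s.\,extensions.
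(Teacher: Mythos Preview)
Your strategy is the same as the paper's --- show that the $\sigma$-additive product measure is an f.s.\ extension of $\mu$ via Fubini, then invoke Proposition~\ref{fsunique} --- but you take a longer path. The paper works only with $n=2$: it checks that $\mu_0^{\otimes 2}$ induces an f.s.\ extension of $\mu_x$ over $\mu_y$, concludes $\mu_x\otimes\mu_y=\mu_y\otimes\mu_x$, and finishes with criterion~(4) of Theorem~\ref{genstable} (self-commutation). Your route through all $n$ and criterion~(2) is not needed.

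More importantly, your detour introduces a step that is not justified and probably not true in general: the claim that measurability of externally definable subsets of $M^2$ in $\mathcal A^{\otimes 2}$ propagates to $M^n$ ``by induction (grouping coordinates into tuples)''. Grouping $(x_1,\ldots,x_{n-1})$ into a single tuple variable does not let you reapply the hypothesis, which is stated only for two \emph{single} variables; and knowing that every slice $\phi(M^{n-1},a)$ lies in $\mathcal A^{\otimes(n-1)}$ does not imply $\phi(M^n)\in\mathcal A^{\otimes n}$ (measurability of sections never yields joint measurability). You flag this as a ``technical obstacle'', but it is precisely the obstacle the paper's formulation is designed to avoid: the hypothesis is calibrated to $M^2$ because that is exactly what criterion~(4) needs. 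So the fix is not to push the induction through but to drop it --- do your Fubini computation once for $n=2$ and cite Theorem~\ref{genstable}(4).
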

\begin{proof}
Note first that by construction $\mu$ is finitely satisfiable in $M$ (hence also $M$-invariant).

We have at our disposal two different amalgams of $\mu_x$ by $\mu_y$. The first one is $\mu^{(2)}_{xy}=\mu_x \otimes \mu_y$ from the model-theoretic world. The second one comes from probability theory: we may form the product measure $\mu_0^{2}= \mu_0 \times \mu_0$ which is a $\sigma$-additive measure on $(M^2,\mathcal A^{\otimes 2})$. By hypothesis, $\mu_0^2$ induces a global Keisler measure $\mu_2$. Of course, $\mu^{(2)}$ and $\mu_2$ coincide on products $\phi(x)\wedge \psi(y)$ (they are both separated amalgam of $\mu_x$ and $\mu_y$). We will prove that in fact $\mu_2=\mu^{(2)}$. For this, it is enough to check that $\mu_2$ is an f.s.\,extension in $M$ of $\mu_x$.

Let $\theta(x,y), \phi(x) \in \mathcal L(\bar M)$. We have: 
\begin{align*}
\mu_0^2 (\theta(x,y) \wedge \phi(x)) &= \int_{(a,b) \in M^2} \theta(a,b) \wedge \phi(a) d\mu_0^{2}
\\ &= \int_{a \in \phi(M)} \int_{b \in M} \theta(a,b) d\mu_0 d\mu_0
\\ &\leq \mu_0(\phi(M)).\sup_{a \in \phi(M)} \int_{b \in M} \theta(a,b) d\mu_0
\\ &= \mu(\phi(x)).\sup_{a \in \phi(M)} \mu(\theta(a,y)).
\end{align*}

By Proposition \ref{fsunique}, this proves that $\mu_2=\mu_x \otimes \mu_y$.

By the usual Fubini theorem, $\mu_0^{\otimes 2}$ is a symmetric measure. Therefore it is also the case for $\mu^{(2)}$: $\mu^{(2)}(\theta(x,y))=\mu^{(2)}(\theta(y,x))$ for all $\theta(x,y) \in L(\bar M)$. By Theorem \ref{genstable}, this shows that $\mu$ is generically stable.
\end{proof}

\begin{rem}
The assumption that externally definable sets are measurable for the product $\sigma$-algebra is of course necessary. Consider for example an $\omega_1$-saturated model $M$ of RCF. Let $p\in S(M)$ be the type at $+\infty$ and $\tilde p$ the global co-heir of $p$. Then $\tilde p$ is induced by a $\sigma$-additive measure on $M$ (equipped with the Borel $\sigma$-algebra). Of course it is not generically stable, and note that the set $\{(x,y) \in M^2: x\leq y\}$ is not measurable for the product algebra.
\end{rem}

As a corollary, we recover the following result from \cite{NIP3}, Section 6.
\begin{cor}
Let $M$ be either $\mathbb R$: the standard real numbers equipped with any o-minimal structure  expanding the field operations, or $\mathbb Q_p$: the standard p-adic field. Let $\mu_0$ be a $\sigma$-additive measure on $M$, then $\mu_0$ induces a smooth Keisler measure $\mu$.
\end{cor}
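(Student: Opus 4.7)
The strategy is to combine Theorem \ref{sigma} with Corollaries \ref{omin} and \ref{padic}. Let $\mathcal A$ be the usual Borel $\sigma$-algebra on $M$; since both $\mathbb R$ and $\mathbb Q_p$ are separable metric spaces, one has $\mathcal A^{\otimes 2} = \mathcal B(M^2)$, so verifying the hypothesis of Theorem \ref{sigma} reduces to showing that every externally definable subset of $M^2$ is Borel (and similarly in dimension one, which is the easier case).

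The measurability check is the place where the specific structure of $M$ intervenes, via cell decomposition. In the o-minimal case, any formula $\phi(x_1,x_2;\bar c)$ with $\bar c$ in the monster partitions $\bar M^2$ into finitely many cells whose boundary walls are graphs of continuous definable functions. Restricting to $M^2$, each cell becomes a finite conjunction of conditions of the form $f(\bar x) < g(\bar x)$ or $f(\bar x) = g(\bar x)$ with $f,g$ continuous on a Borel domain; hence the trace on $M^2$ is Borel (in fact locally closed). The same argument, based on the Denef--Macintyre $p$-adic cell decomposition, handles $\mathbb Q_p$: externally definable subsets of $\mathbb Q_p^2$ are finite boolean combinations of sets cut out by valuative inequalities on continuous definable functions, hence Borel.

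With this in hand, Theorem \ref{sigma} yields that the induced global Keisler measure $\mu$ is generically stable. Finally, applying Corollary \ref{omin} in the o-minimal case, or Corollary \ref{padic} in the $p$-adic case, shows that this generically stable $\mu$ is smooth, which is what we wanted. The only potentially subtle step is the measurability verification, and it rests on the fact that o-minimal and $p$-adic definable functions have enough regularity (piecewise continuity on cells) that their graphs remain Borel in the standard model even when they are defined over external parameters; the existence of this regularity is precisely the content of cell decomposition in both settings.
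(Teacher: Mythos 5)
Your overall route is exactly the paper's: apply Theorem \ref{sigma} to get that the induced Keisler measure is generically stable, then Corollary \ref{omin} (resp.\ \ref{padic}) to conclude smoothness; the paper's proof consists of precisely these two citations. The extra content in your write-up is the verification of the measurability hypothesis of Theorem \ref{sigma}, and that is where there is a genuine gap. The cell decomposition you invoke decomposes $\bar M{}^2$ with respect to the external parameters $\bar c$; the cell walls are graphs of functions that are definable over $\bar c$ and take values in $\bar M$, not in $M$. So after restricting to $M^2$ you are \emph{not} looking at conditions $f(\bar x)<g(\bar x)$ with $f,g$ continuous $M$-valued functions on a Borel domain: for $a\in M$ the value $f(a)$ is in general nonstandard, and continuity of $f$ in the order (resp.\ valuation) topology of $\bar M$ does not by itself make the trace $\{(a,b)\in M^2: b<f(a)\}$ Borel --- one would first have to control the function $a\mapsto \st(f(a))$ and the set of points $a$ where $f(a)$ lies above or below its standard part, which your argument does not address. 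In particular, the parenthetical claim that the traces are locally closed does not follow from what you wrote.

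The standard way to close this gap is via definability of types over the standard models: by the Marker--Steinhorn theorem every type over an o-minimal expansion of $\mathbb R$ is definable (using Dedekind completeness), and by Delon's theorem the same holds over $\mathbb Q_p$; consequently every externally definable subset of $M^n$ is in fact definable with parameters in $M$ itself, hence a finite union of cells over $M$, hence Borel. With that in hand (together with your correct observation that $\mathcal B(M)\otimes\mathcal B(M)=\mathcal B(M^2)$ for these separable metric spaces), the hypothesis of Theorem \ref{sigma} is verified in both cases, and the remainder of your argument coincides with the paper's proof.
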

\begin{proof}
Theorem \ref{sigma} implies that $\mu$ is generically stable, then using Corollary \ref{omin} or \ref{padic}, we deduce that it is smooth.
\end{proof}

\begin{question}
More generally, if we assume that $\mathcal A$ is generated as a $\sigma$-algebra by definable sets, is it the case that $\mu$ is smooth?
\end{question}

\begin{prop}
Let $M=\mathbb R$ be an o-minimal expansion of the standard model. Let $\mu$ be a smooth measure over $M^k$ concentrating on $[0,1]^k$. Then there is a $\sigma$-additive Borel measure $\mu_0$ on $[0,1]^k$ such that $\mu_0$ induces $\mu$ as a Keisler measure.
\end{prop}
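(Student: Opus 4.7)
The plan is to view $\mu$ as a finitely additive probability on the Boolean algebra $\mathcal{A}$ of $\mathbb{R}$-definable subsets of $[0,1]^k$, prove that it is $\sigma$-additive on $\mathcal{A}$, and extend it via Carath\'eodory's theorem to a Borel measure $\mu_0$ on $[0,1]^k$. By the Marker--Steinhorn theorem (applicable since $\mathbb{R}$ is Dedekind complete in its o-minimal expansion), every type over $\mathbb{R}$ is definable, so every externally definable subset of $\mathbb{R}^k$ already lies in $\mathcal{A}$. Combined with finite satisfiability of $\mu$ in $\mathbb{R}$ (a consequence of smoothness), this ensures that for $\phi(x)\in L(\bar M)$ the value $\mu(\phi(x))$ depends only on $\phi(\mathbb{R}^k)\cap[0,1]^k\in\mathcal{A}$; since $\mathcal{A}$ contains all open rational cubes, $\sigma(\mathcal{A})$ contains the standard Borel $\sigma$-algebra, and any $\sigma$-additive extension will yield the desired $\mu_0$.

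The crux is the following inner regularity property: for every $X\in\mathcal{A}$ and every $\epsilon>0$ there is a standard-topologically compact definable $K\subseteq X$ with $\mu(X\setminus K)<\epsilon$. Granted this, $\sigma$-additivity of $\mu$ on $\mathcal{A}$ will follow from compactness of $[0,1]^k$ in the usual way: if $X_n\in\mathcal{A}$ is decreasing with $\bigcap_n X_n=\emptyset$, pick compact definable $K_n\subseteq X_n$ with $\mu(X_n\setminus K_n)<\epsilon/2^{n+1}$; the intersections $\bigcap_{n\leq N}K_n$ are decreasing compacts with empty intersection, hence some $\bigcap_{n\leq N_0}K_n=\emptyset$, forcing $\mu(X_{N_0})<\epsilon$.

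To prove inner regularity I will use o-minimal cell decomposition to reduce to the case $X=C$ is a single cell, and fix a standard-topological definable homeomorphism $h:C\to(0,1)^d$ (available in any o-minimal expansion of $\mathbb{R}$). Setting $K_\eta:=h^{-1}([\eta,1-\eta]^d)$ yields a family of compact definable subsets of $C$ with $\bigcup_{\eta>0}K_\eta=C$. Applying Proposition~\ref{smoothlemma} to the formula $\phi(x,y):=(x\in C\setminus K_y)$ (absorbing the $\mathbb{R}$-parameters of $C$ and $h$) with tolerance $\epsilon$ produces $\nu_i^1,\nu_i^2,\psi_i\in L(\mathbb{R})$ satisfying $\mu(\nu_i^2)-\mu(\nu_i^1)<\epsilon$; by o-minimality of $\mathbb{R}$ some $\psi_{i_0}$ contains an interval $(0,c)$. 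For every $y\in(0,c)$ we have $\nu_{i_0}^1\subseteq C\setminus K_y$, so $\nu_{i_0}^1\subseteq\bigcap_{y\in(0,c)}(C\setminus K_y)=C\setminus\bigcup_{y\in(0,c)}K_y=\emptyset$, whence $\nu_{i_0}^1=\emptyset$ and $\mu(\nu_{i_0}^2)<\epsilon$. Since $\nu_{i_0}^2\supseteq C\setminus K_{y_0}$ for any fixed $y_0\in(0,c)$, this gives $\mu(C\setminus K_{y_0})<\epsilon$, and $K:=K_{y_0}$ is the required compact approximation.

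The main obstacle is the inner regularity step: one must extract from Proposition~\ref{smoothlemma}, applied to the shell family $\{C\setminus K_y\}_y$, a definable set which agrees with $C$ up to $\mu$-measure $\epsilon$ and is standard-topologically compact. The feature that makes this work for $\mathbb{R}$ (rather than an arbitrary o-minimal model) is that o-minimality forces one of the partition formulas $\psi_i$ to contain a right-neighborhood of $0$ in the parameter space, which simultaneously collapses the inner approximation $\nu_{i_0}^1$ to $\emptyset$ and bounds the $\mu$-measure of the boundary shell $C\setminus K_{y_0}$ uniformly.
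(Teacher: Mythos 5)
Your proof is correct, but it takes a genuinely different route from the paper's. The paper constructs $\mu_0$ top--down as the pushforward $\st_*(\mu)$ of $\mu$ (viewed as a regular Borel measure on the space of types concentrating on $[0,1]^k$) along the standard part map, and then verifies $\mu(X)=\mu_0(X)$ for definable $X$ by showing $\mu(\st^{-1}(X)\setminus X)=0$ for closed $X$ --- the point being that any type in that difference pushes forward under the distance function $d_X$ to the non-realized type $0^+$, where the smooth measure $(d_X)_*(\mu)$ cannot have an atom --- followed by an induction on dimension. You instead build $\mu_0$ bottom--up: you restrict $\mu$ to the algebra of definable subsets of $[0,1]^k$, establish inner regularity by \emph{compact definable} sets, deduce countable additivity on the algebra from compactness of $[0,1]^k$, and extend by Carath\'eodory. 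Your key step --- applying Proposition \ref{smoothlemma} to the shell family $C\setminus K_y$ and using o-minimality to find a $\psi_{i_0}$ containing an interval $(0,c)$, which forces $\nu_{i_0}^1$ to be empty --- is a clever and legitimate use of the approximation characterization of smoothness; the paper uses smoothness in a different but equally essential way (no atoms at non-realized types). Two small points to tighten: the identity $\nu_{i_0}^1\subseteq\bigcap_{y\in(0,c)}(C\setminus K_y)=\emptyset$ should be read on traces over the standard model $\mathbb R$ (in the monster the intersection is a nonempty halo), with emptiness of the $\mathbb R$-definable set $\nu_{i_0}^1$ then following by elementarity; and the definable homeomorphism $h:C\to(0,1)^d$ uses the field structure, though you could avoid it by exhausting each (locally closed) cell by the definable compacta $\{x\in \bar C: d(x,\bar C\setminus C)\geq\eta\}$. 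What each approach buys: the paper's argument is shorter and exhibits $\mu_0$ explicitly as $\st_*(\mu)$; yours is more self-contained measure theory, isolates the reusable fact that a smooth measure over $\mathbb R$ is inner regular with respect to compact definable sets, but needs Marker--Steinhorn (which the paper's write-up also tacitly uses when it reduces to definable sets) to handle externally definable sets.
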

\begin{proof}
Let $\Omega_k$ be the subspace of $S_k(M)$ of types that concentrate on $[0,1]^k$. Note that the standard part application $\st$ induces an application $\st: \Omega_k \rightarrow [0,1]^k$. This is a Borel map (the inverse image of a closed set is closed). In particular, we can consider the pushforward measure $\mu_0=\st_*(\mu)$. It is a $\sigma$-additive measure on $[0,1]^k$. All we now need to show is that $\mu_0$ induces $\mu$ as a Keisler measure, {\it i.e.}, that $\mu(X)=\mu_0(X)$ for any definable set $X$. 

Let $X$ be a definable set of $[0,1]^k$. Assume first that $X$ is closed. We have a definable map $d_X: M^k \rightarrow M$ such that $d_X(\bar x)$ is the distance of $\bar x$ to $X$. For $\epsilon>0$, let $X_\epsilon$ be the closed $\epsilon$-neighborhood of $X$: $X_\epsilon = \{\bar x \in [0,1]^k: d_X(\bar x) \leq \epsilon\}$. It is also a definable set. We have $\st^{-1}(X)=\cap_{\epsilon>0} X_\epsilon$. Let $p$ be a type in $\st^{-1}(X)\setminus X$. Then $(d_X)_*(p)$ is the type $0^+$ of $S(M)$. As $\mu$ is smooth, $(d_X)_*(\mu)$ is also smooth and it is not possible that it has an atom on $0^+$. Thus $\mu(\st^{-1}(X) \setminus X)=0$.

We treat the general case by induction on the dimension of $X$. The case of dimension 0 is trivial. So let $X$ be any definable set. Let $O$ be its interior and $\bar X$ its closure. Then $D=\bar X \setminus O$ has lower dimension then $X$. We know that $\mu(\bar X)=\mu_0(\bar X)$ and by induction $\mu(\bar X \setminus X)=\mu_0(\bar X \setminus X)$. Hence $\mu(X)=\mu_0(X)$.
\end{proof}


\begin{thebibliography}{99}
\bibitem{Adler} H. Adler, Introduction to theories without the independence property, to appear in Archive Math. Logic.

\bibitem{BY1} I. Ben-Yaacov, Continuous and random Vapnik-Chervonenkis classes, Israel Journal of Mathematics 173 (2009), 309-333.

\bibitem{BY2} I. Ben-Yaacov, Transfer of properties between measures and random types, in preparation.

\bibitem{BY-Keisler}  I. Ben Yaacov and H.J. Keisler, Randomization of models as metric structures, Confluentes Mathematici 1  (2009), no. 2, 197-223.


\bibitem{NIP1} E. Hrushovski, Y. Peterzil, and A. Pillay, Groups, measures and the $NIP$, Journal AMS, 21 (2008), 563-596. 

\bibitem{NIP2} E. Hrushovski and A. Pillay, On $NIP$ and invariant measures, preprint second version Jan. 2009, submitted.

\bibitem{NIP3} E. Hrushovski, A. Pillay, P. Simon, Generically stable and smooth measures in NIP theories, preprint.


\bibitem{Karpinski-Macintyre} M. Karpinski and A. J. Macintyre, Approximating volumes and integrals in $o$-minimal and $p$-minimal theories, in {\em Connections between Model Theory and Algebraic and Analytic Geometry}, Quaderni di matematica, vol 6, Seconda Universita di Napoli, 2000.

\bibitem{Keisler} H. J. Keisler, Measures and forking, Annals of Pure and Applied Logic 45 (1987), 119-169.


\bibitem{OP} A. Onshuus and A. Pillay, Definable groups and compact $p$-adic Lie groups, Journal LMS, 78(2008). 

\bibitem{Pillaystrong} A. Pillay, Strong dependence, weight, and measure, preprint.

\bibitem{Simon} P. Simon, Th\'eories NIP, M.Sc. thesis, Paris VII.

\bibitem{trans} P. Simon, Distal and non-distal theories, in preparation, available on author's web page : phare.normalesup.org/$\sim$simon.

\end{thebibliography}
\end{document}